\newcommand{\bmat}[1]{\begin{bmatrix}#1\end{bmatrix}}
\newcommand{\Giu}{{\bigskip\noindent}}
\newcommand{\noi}{{\noindent}}
\newtheorem{theorem}{Theorem}
\newtheorem{definition}[theorem]{Definition}
\newtheorem{proposition}[theorem]{Proposition}
\newtheorem{lemma}[theorem]{Lemma}
\newtheorem{example}{Example}
\newtheorem{remark}[theorem]{Remark}
\newtheorem{coment}[theorem]{Comment}
\newtheorem{convent}[theorem]{Convention}
\newtheorem{conjecture}[theorem]{Conjecture}
\newtheorem{question}{Question}
\newtheorem{sublemma}[theorem]{Sublemma}
\newtheorem{corollary}[theorem]{Corollary}
\newtheorem{assumption}[theorem]{Assumption}
\newtheorem{notation}[theorem]{Notations}
\newtheorem{notationalrem}[theorem]{Notational Remark}
\newtheorem{tools}[subsection]{$\negsp\negsp$}
\newcommand\asm[1]{ \begin{assumption}\label{#1} }
\newcommand\easm{ \end{assumption} }
\newcommand\dfn[1]{ \begin{definition}\label{#1} }
\newcommand\dfntwo[2]{ \begin{definition}[#2]\label{#1} }
\newcommand\edfn{ \end{definition} }
\newcommand\examp[1]{ \begin{example}\label{#1} \small \rm}
\newcommand\rem[1]{ \begin{remark}\label{#1} \small \rm}
\newcommand\comt[1]{ \begin{coment}\label{#1} \small \rm}
\newcommand\convt[1]{ \begin{convent}\label{#1} \small \rm}
\newcommand\remtwo[2]{ \begin{remark}[#2]\label{#1} \rm}
\newcommand\eexamp{ \end{example} }
\newcommand\erem{ \end{remark} }
\newcommand\ecomt{ \end{coment} }
\newcommand\econvt{ \end{convent} }
\newcommand\thm[1]{ \begin{theorem}\label{#1}}
\newcommand\thmtwo[2]{ \begin{theorem}[#2]\label{#1}}
\newcommand\ethm{ \end{theorem} }
\newcommand\pro[1]{ \begin{proposition}\label{#1}}       
\newcommand\protwo[2]{ \begin{proposition}[#2]\label{#1}}
\newcommand\epro{ \end{proposition} }
\newcommand\conj[1]{ \begin{conjecture}\label{#1}}       
\newcommand\conjtwo[2]{ \begin{conjecture}[#2]\label{#1}}
\newcommand\econj{ \end{conjecture} }
\newcommand\quest[1]{ \begin{question}\label{#1}}       
\newcommand\questtwo[2]{ \begin{question}[#2]\label{#1}}
\newcommand\equest{ \end{question} }
\newcommand\lem[1]{ \begin{lemma}\label{#1}}
\newcommand\lemtwo[2]{ \begin{lemma}[#2]\label{#1}}
\newcommand\elem{ \end{lemma} }
\newcommand\sublem[1]{ \begin{sublemma}\label{#1}}
\newcommand\sublemtwo[2]{ \begin{sublemma}[#2]\label{#1}}
\newcommand\esublem{ \end{sublemma} }
\newcommand\cor[1]{ \begin{corollary}\label{#1}}
\newcommand\cortwo[2]{ \begin{corollary}[#2]\label{#1}}
\newcommand\ecor{ \end{corollary} }
\newcommand\notat[1]{ \begin{notation}\label{#1} \sl}
\newcommand\enotat{ \end{notation} }
\newcommand\notrem[1]{ \begin{notationalrem}\label{#1} \sl}
\newcommand\enotrem{ \end{notationalrem} }
\newcommand\equ[1]{{\rm (\ref{#1})}}
\newcommand\beq[1]{ \begin{equation}\label{#1} }
\newcommand{\eeq}{ \end{equation} }
\newcommand\beqa[1]{ \begin{eqnarray} \label{#1}}
\newcommand{\eeqa}{ \end{eqnarray} }
\newcommand{\beqano}{ \begin{eqnarray*} }
\newcommand{\eeqano}{ \end{eqnarray*} }
\newcommand{\proof}{\par\medskip\noindent{\bf Proof\ }}
\newcommand\warning[1]{{#1}}
\newcommand\rwarning[1]{{#1}}
\newcommand\gwarning[1]{{#1}}
\newcommand{\ie}{{\it i.e.\  }}
\newcommand{\fig}{{\it fig.}}
\newcommand{\etc}{{\it etc\ }}
\newcommand{\wrt}{{\it w.r.t.\ }}
\newcommand{\st}{{\it s.t.\ }}
\newcommand{\resp}{{\it resp.\ }}
\newcommand{\cf}{{\it cf.\ }}
\newcommand{\sss}{{\it iff\ }}
\newcommand{\qed}{\hskip.5truecm
            \vrule width 1.7truemm height 3.5truemm depth 0.truemm
            \par\Giu}
\newcommand{\qedeq}{\hskip.5truecm
            \vrule width 1.7truemm height 3.5truemm depth 0.truemm}
\newcommand\ovl[1]{ \overline {#1} }
\newcommand\uvl[1]{ \underline {#1} }
\newcommand\su[1]{ \frac{1}{ {#1}} }
\DeclareMathOperator\dist{dist}
\DeclareMathOperator\vdist{vdist}
\newcommand{\dpr}{ {\partial}   }
\newcommand\eqby[1]{\stackrel{\equ{#1}}{=}}
\newcommand{\negsp}{\hspace{-.04truecm}}
\newcommand\ex{\, e}
\newcommand{\g}{ {\gamma}   }
\newcommand{\G}{ {\Gamma}   }
\renewcommand{\d}{ {\delta}   }
\newcommand{\vae }{ {\varepsilon}   }
\renewcommand{\th }{ {\theta}   }
\renewcommand{\l}{ {\lambda}   }
\newcommand{\m}{ {\mu}   }
\newcommand{\f}{ {\varphi}   }
\renewcommand{\O}{ {\Omega}   }
\newcommand{\torus}{ {\mathbb{ T}}   }
\renewcommand{\natural}{ {\mathbb{ N}}   }
\newcommand{\real}{ {\mathbb{ R}}   }
\newcommand{\integer}{ {\mathbb{ Z}}   }
\newcommand{\rational}{ {\mathbb{ Q}}  }
\newcommand{\rd}{ {\real^2}   }
\newcommand{\el}{ {\ell}}
\newcommand{\cl}{ {m}}
\newcommand{\elips}{ {\mathcal{E}}}
\newcommand{\eme}{ {m}}
\newcommand{\cA}{ {\cal A} }
\newcommand{\cV}{ {\cal V} }
\font\teneufm=eufm10
\font\seveneufm=eufm7
\font\fiveeufm=eufm5
\newcommand{\wt}{\widetilde}
\renewcommand\subset{\subseteq}
\newcommand{\fc}{\mathfrak{c}}
\newcommand{\T}{\mathbb{T}}
\newcommand{\R}{\mathbb{R}}
\newcommand{\footremember}[2]{%
    \footnote{#2}
    \newcounter{#1}
    \setcounter{#1}{\value{footnote}}%
}
\newcommand{\footrecall}[1]{%
    \footnotemark[\value{#1}]%
    	\newcommand{\pink}[1]{{\color{pink}{#1}}}
	\newcommand{\blue}[1]{{\color{blue}{#1}}}
	\newcommand{\red}[1]{{\color{red}{#1}}}
	\newcommand{\brown}[1]{{\color{brown}{#1}}}
	\newcommand{\black}[1]{\color{black}}
} 
\title{{\bf  }}
\title{{ Birkhoff Conjecture for nearly centrally symmetric domains}}
\author{%
	V. Kaloshin\footremember{alley}{Institute of Science and Technology Austria (ISTA)} \footnote{vadim.kaloshin@gmail.com}%
	\and  C. E. Koudjinan\footrecall{alley}\footnote{koudjinanedmond@gmail.com}%
    \and Ke Zhang\footremember{trailer}{University of Toronto} \footnote{kzhang@math.toronto.edu}%
  }
 \date{\small \today}
\begin{document}

%
\maketitle
\begin{abstract}
\warning{In this paper we prove a perturbative version of a remarkable 
Bialy--Mironov \cite{bialy2022birkhoff} result. They prove non perturbative 
Birkhoff conjecture for centrally-symmetric {convex} domains, 
namely, a centrally-symmetric {convex} domain with integrable 
billiard is ellipse. We combine techniques from Bialy--Mironov \cite{bialy2022birkhoff} 
with a local result by Kaloshin--Sorrentino \cite{kaloshin2018local} and show 
that a domain close enough to a centrally symmetric one with integrable billiard 
is ellipse. {To combine these results we derive a slight extension 
of Bialy--Mironov  \cite{bialy2022birkhoff} by proving that a notion of rational integrability is 
equivalent to the $C^0$-integrability condition used in their paper.}}
\end{abstract}
{\bf Keywords: } Birkhoff billiard, Birkhoff conjecture, integrability.

\section{Introduction}

A mathematical billiard is a system describing the inertial motion of a
point mass inside a domain, with elastic reflections at the boundary (which
is assumed to have infinite mass). This simple model was first proposed by
G. D. Birkhoff as a mathematical playground where “the formal side, usually so
formidable in dynamics, almost completely disappears and only the interesting
qualitative questions need to be considered,” \gwarning{\cite[pp.~155–156]{birkhoff2020periodic}.}

This dynamical system associated to billiards has simple local dynamics, however, 
its study turns out to be really complex and has many important open questions, see
e.g. \gwarning{\cite{gutkin2012billiard}.} In this paper we study integrable billiards and Birkhoff conjecture. 
Let us first recall some properties of the billiard map. We refer e.g. to \gwarning{\cite{tabachnikov2005geometry,siburg2004principle}}, for a more comprehensive introduction to the study of billiards.
\medskip

Let $\O$ be a bounded strictly convex domain in $\real^2$ (for short {\textit{billiard table}}) 
with $C^r$ boundary $\dpr\O$, with $r\ge 3$.\footnote{Observe that if $\O$ is not convex, 
then the billiard map is not continuous; in this article we will be interested only in strictly convex 
domains. Moreover, as pointed out by Halpern \cite{halpern1977strange}, if the boundary is
not at least $C^3$, then the flow might not be complete.} 
The phase space $\mathscr M$ of the billiard map consists of unit vectors $(x, v)$ whose foot 
points $x$ are on $\dpr\O$ and that have inward directions.  The billiard ball map 
$T : \mathscr M\to \mathscr M$ takes $(x, v)$ to $(x',v')$, where $x'$ represents the point where
the trajectory starting at $x$ with velocity $v$ hits the boundary $\dpr\O$ again, and $v'$ is 
the reflected velocity, according to the standard reflection law: angle of incidence is equal to 
the angle of reflection. 
%
\noi
Assume that the boundary $\dpr\O$ is 
parametrized by arc--length $s$ and let $\g : \torus_{|\dpr\O|}  \to \real^2$ denote such 
a parametrization, where $\torus_{|\dpr\O|}\coloneqq \real/ {\mathbb Z \,\cdot}|\dpr\O|$ and $|\dpr\O|$ 
is the length of $\dpr\O$. 
Let $\th$ be the angle between $v$ and the positive tangent to $\dpr\O$ at $s$. Hence, 
$\mathscr M$ can be identified with the annulus 
$\mathbb{A}_\O \coloneqq \torus_{|\dpr\O|} \times (0, \pi)$ and the billiard map $T$ 
can be described as (see \fig~\ref{figgBM})
$$
T\colon (s,\th)\longmapsto (s',\th'),\qquad 
$$
where $(s,\th), (s',\th')\in \mathbb{A}_\O.$
\begin{center}
\includegraphics[scale=1.2]{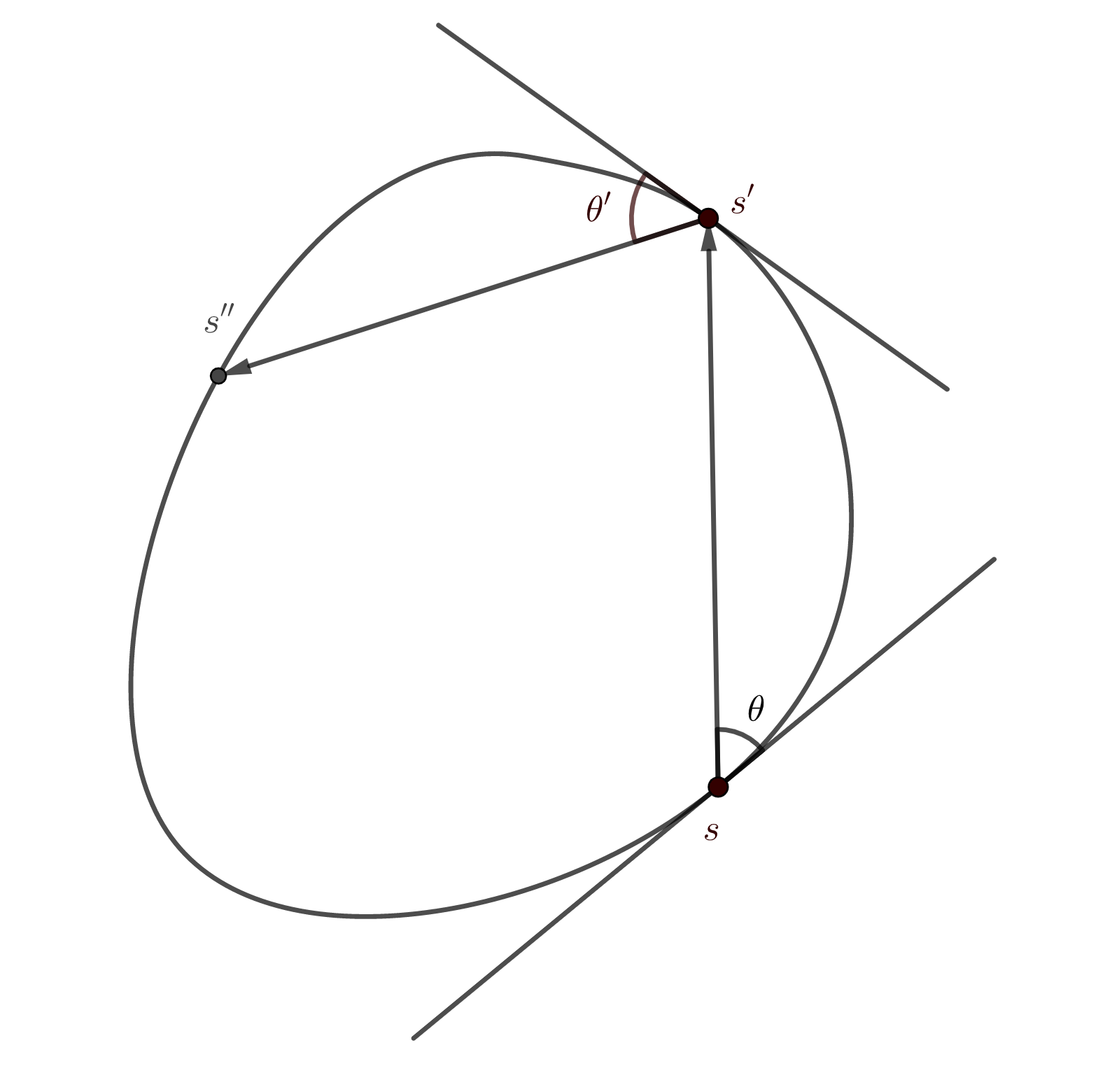} 
  \captionof{figure}{Billiard map: $T(s,\th)= (s',\th')$.} \label{figgBM} 
\end{center}
\ \\
\rwarning{
		A billiard trajectory is called $(p, q)$--periodic if the trajectory is $q$--periodic and the orbit winds around the boundary $p$ times within one period. Equivalently, after lifing the trajectory $\{(s_n, \theta_n)\}_{n \in \mathbb{Z}}$ to the universal cover $\R \times (0, \pi)$, we have $s_q = s_0 + p|\partial\Omega|$. For such orbits, the rotation number is given by $p/q$.  
Furthermore, there exist at least two periodic orbits of rotation number $p/q$,\footnote{In the present paper, rationals are considered in reduced form.} for any rational $p/q$ by a Theorem by Birkhoff \cite{birkhoff1913proof}. 
}

The central objects of this paper are the notions of {\it caustics} and {integrability}.
To state a version of Birkhoff conjecture, we introduce some notions.

\dfn{RatInt} {\bf (i)}  A curve $\mathscr C\subset \O$ is called a caustic for the billiard on the table $\O$ 
if any billiard orbit having one segment tangent to $\mathscr C$ has all its segments tangent to it.

\noi
A caustic is called $(p,q)$--rationally integrable ($p,q\in\natural$ with $p\le q/2$) if all its tangential orbits 
are $(p,q)$--periodic; whenever $p=1$, we shall simply call such a caustic $q$--rationally integrable.

\noi
{\bf (ii)} A billiard table $\O$ is called $q_0$--rationally  integrable (\resp \rwarning{weakly} integrable) if it admits a $(p,q)$--rationally (\resp \rwarning{a $q$--rationally})
integrable caustic for each $0<p/q\le 1/q_0$ (\resp \rwarning{$q\ge q_0$}).
\edfn


We start with a classical result by M. Bialy \cite{bialy1993convex} who proved the following theorem 
concerning global integrability: if the phase space of the billiard ball map is globally foliated 
by continuous invariant curves that are not null--homotopic, then it is a circular billiard.


A strong version of Birkhoff conjecture states that for any $q_0>1$, a $q_0$--\rwarning{rationally weakly}
integrable billiard is a billiard in an ellipse. For domains close to ellipses this conjectures 
was settled for $q_0=3$ in \cite{avila2016integrable,kaloshin2018local} and for 
$q_0=4,5$ in \cite{huang2018nearly}. For $q_0>3$ in a recent work of Koval 
\cite{koval2021domains}. Recently Bialy-Mironov \cite{bialy2017angular} 
proved a remarkable global result stating that any centrally symmetric domain 
that is $C^0-$integrable in the phase space between the boundary and a 4-caustic is an ellipse \cite{bialy2022birkhoff}. 
\medskip 

Our main result is a perturbative version of this result and is a combination 
of \cite{kaloshin2018local} and \cite{bialy2022birkhoff}: {\it a $4$-rationally integrable domain with a $3$-caustic sufficiently close to any centrally symmetric domain is an ellipse.}

\begin{figure}
  \centering
  \includegraphics[width=0.65\linewidth]{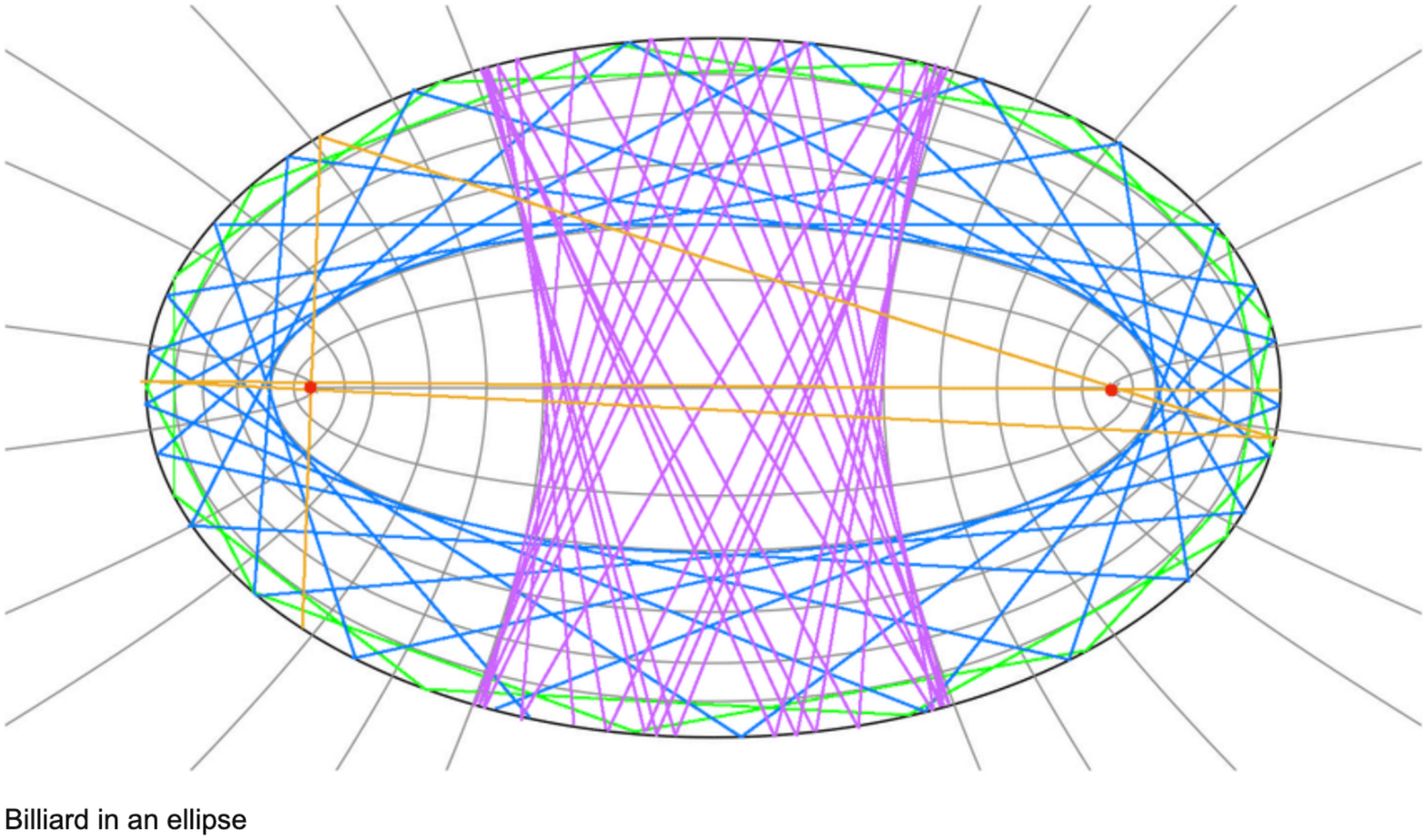}
  \caption{Billiard in an ellipse}
\end{figure}

\subsection{Main result (a technical formulation)}
Let $\Omega$ be a strictly convex subset of $\real^2$ containing the origin $O$, with boundary $\partial \Omega$. It is more convenient to use the support function to represent the boundary, defined by
\[
  h_\Omega(\psi) = \sup \{ x \cos \psi + y \sin \psi: \quad
  (x, y) \in \Omega\}, \qquad \rwarning{\psi\in [0,2\pi]}.
\]
(see \fig~\ref{figgsptfunc}). 
Denoting by $\g_\O(\psi)=(x_\O(\psi),y_\O(\psi))$ the Cartesean coordinates of the point on $\dpr\O$ corresponding to $(\psi,h_\O(\psi))$, we have\footnote{We refer the reader to \cite{resnikoff2015curves} for more details.}
\beq{eqcartsup}
\left\{
\begin{aligned}
&x_\O(\psi)=h_\O(\psi)\cos \psi- h_\O'(\psi)\sin \psi\\
&y_\O(\psi)=h_\O(\psi)\sin \psi+ h_\O'(\psi)\cos \psi,
\end{aligned}
\right.
\eeq
where $h_\O'$ denotes the derivative of $h_\O$. In particular, if the support functions of two boundaries are $C^\cl$-close, \rwarning{then their corresponding parametrization \equ{eqcartsup}  by $\psi$ are $C^{\cl - 1}$-close. 
\newline
Moreover, let $h_\O\in C^\cl$. Then, denoting by $s(\psi)$ the arc--length parameter corresponding to $\psi$, we have
\beq{eqspsi4}
s'(\psi)= \sqrt{x_\O'(\psi)^2+y_\O'(\psi)^2}\eqby{eqcartsup}h_\O(\psi)+h_\O''(\psi)= 1/k_\O(\psi)>0,
\eeq
where $k_\O(\psi)$ is the curvature of $\dpr\O$ \wrt the parameter $\psi$ and for the last equality we refer the reader to \cite{resnikoff2015curves}. The change of parametrization $\psi\mapsto s(\psi)$ is a $C^{\cl-1}$--diffeomorphism.
}

\medskip \noi
\textbf{Example.} The support function of the ellipse $\mathcal{E}_{a,b}\coloneqq \{(x,y)\in\rd:\ x^2/a^2+y^2/b^2=1\}$, $a\ge b>0$, is given by
\beq{sptellipso}
h_{\mathcal{E}_{a,b}}(\psi)=\sqrt{a^2 \cos ^2\psi+b^2 \sin ^2 \psi}=a \sqrt{1-e^2 \sin ^2 \psi}, \qquad e\coloneqq \sqrt{1-\left({b}/{a}\right)^2}\,.
\eeq
\medskip

\begin{center}
\includegraphics[scale=1.5]{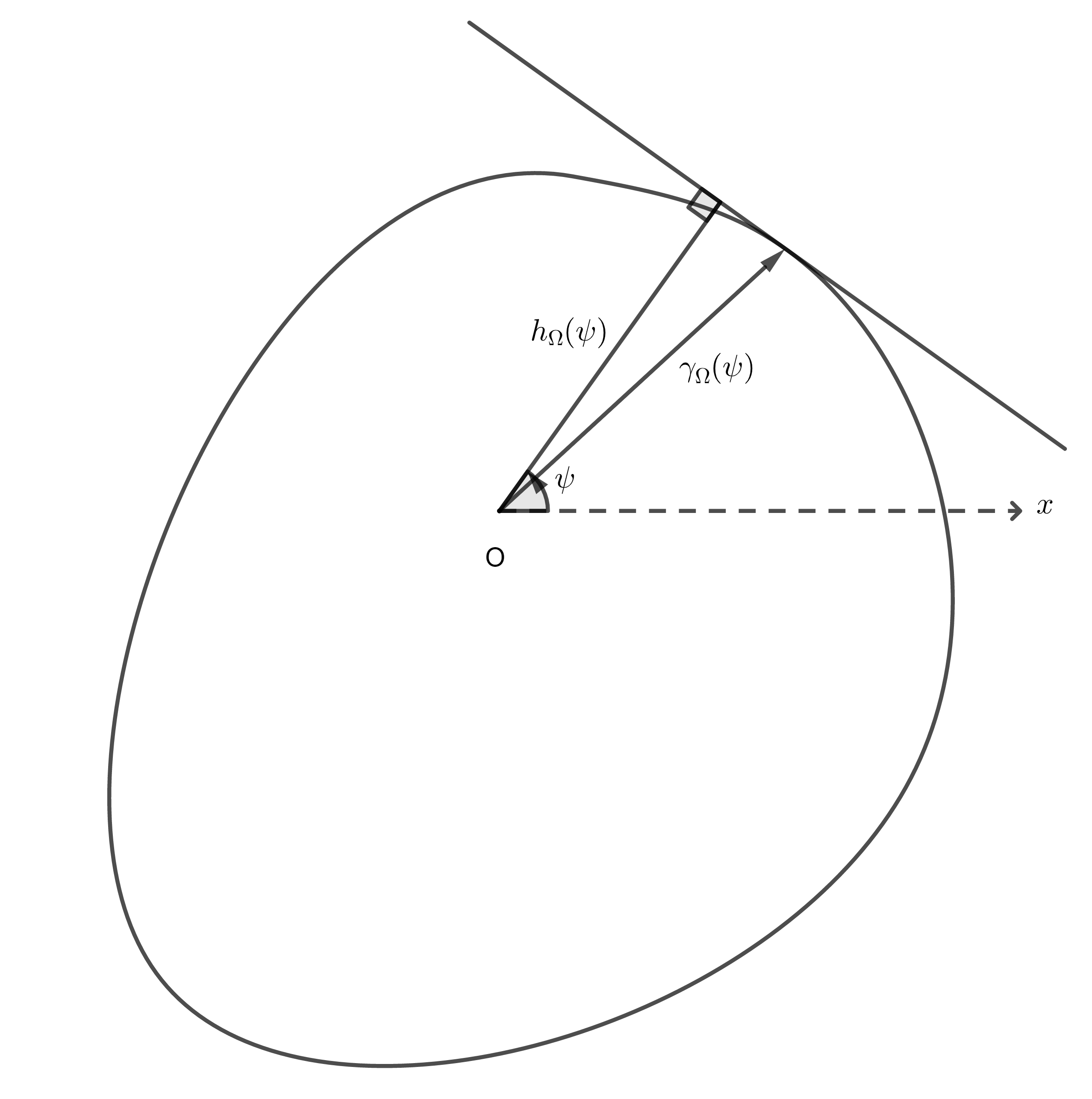} 
\captionof{figure}{Support function of $\Omega$.} \label{figgsptfunc}
\end{center}

{
		\thm{Teo110}
		Let $\O_0$ be a centrally symmetric, strictly convex domain with support function $h_0 \in C^{\cl}(\real/(2\pi \integer))$, with $\cl \ge 40$. Assume that the curvature satisfies
		\[
				k_{\Omega_0}(\psi) > \kappa_0 > 0,
		\]
		and $\|h_0\|_{C^m} < M$. Then there exists $\vae_0 > 0$ depending only on $\kappa_0$, $|\partial \Omega_0|$, and $M$, such that the following holds.

		Assume that the domain $\O_\vae$ with support function $h_\vae$ satisfies
		\[
				\|h_\vae\|_{C^m} < 2M, \quad \|h_\vae - h\|_{C^5} < \vae_0,	
		\]
		and $\Omega_\vae$ admits a $3$--rationally integrable caustic, and satisfies either of the following integrability conditions:
		\begin{enumerate}[(1)]
				\item The phase space between the $4$--rationally integrable caustic and the boundary is foliated by continuous invariant graphs over the $\torus$ component. 
				\item The billiard is $4$-rationally integrable. 
		\end{enumerate}
		Then $\Omega_\vae$ is an ellipse. 
		\ethm
}

\rem{constva0}
\begin{enumerate}[(i)]
		\item	Condition (1) in Theorem \ref{Teo110}, called $C^0$-integrability, is identical to the one in \cite{bialy2022birkhoff}. We will show that condition (2) is equivalent to condition (1), following arguments of \cite{AABZ15}.
		\item There are two different	notions of ``rational integrability'' in the literature. In \cite{kaloshin2018local}, \rwarning{it coreesponds to our ``rational weak integrability''. Our definition of ``rational integrability'' agrees with the one in \cite{koval2021domains} and is strictly stronger (since we also require $(p, q)$-caustics to exist).}

				If a billiard is $4$-rationally integrable, and in addition admits a $3$-rationally integrable caustic, then \rwarning{the billiard is $3$-rationally weakly integrable}, whence verifies the condition in \cite{kaloshin2018local}.
\end{enumerate}
\erem


The reminder of the paper is organized as follows. In $\S$ \ref{sec01}, after recalling the classical ($\S$ \ref{sec011}) and the non--standard ($\S$ \ref{sec012}) symplectic coordinates systems of the billiard map, we provide quantitative estimates on the change between the corresponding parameters ($\S$ \ref{sec013}). In $\S$ \ref{sec4perObt}, we provide some properties of the $4$--periodic orbits in a billiard table which is close to a centrally symmetric one (Theorem~\ref{thm4.1BialMir}). In $\S$ \ref{secNCSPls3RI}, we prove that a nearly--centrally--symmetric and $4$--rationally integrable billiard table is necessarily close to some ellipse $\elips_0$. In $\S$ \ref{ElipPolCoo}, we prove the closeness of the billiard table to the ellipse $\elips_0$ in the elliptic polar coordinates associated to $\elips_0$ (Lemma~\ref{lem:elliptic-cor}). 
In $\S$ \ref{ProfMainteo}, we complete the proof of the main Theorem~\ref{Teo110}. We collect in $\S$ Appendix~\ref{TecfAxt} some technical facts. In $\S$ Appendix~\ref{interprEz}, we prove an interpolation--type result. 
\rwarning{
		In $\S$ Appendix \ref{sec:d0}, we prove the uniqueness of the $(p, q)$-loop orbit, which is a more general version of $q$-loop orbits needed for our proof. See Definition \ref{def:q-loop}.
}
In $\S$ Appendix~\ref{EquivInteg}, we prove the equivalence between the rational and the $C^0$--integrability.

\section{Generating functions and some auxiliary facts \label{sec01}}

\subsection{The traditional billiard generating function \label{sec011}}

Denote the Euclidean distance between two points of $\dpr\O$ by
$$
l(s,s')\coloneqq |\g(s)-\g(s')|.
$$
Then, one checks easily
\beq{gen005}
\dpr_s l(s,s')=-\cos \th,\qquad \dpr_{s'} l(s,s')=\cos \th',
\eeq
where $T(s,\theta)=(s',\theta')$. In particular, if we lift everything to the universal cover and 
introduce the new coordinates $(x,y)=(s,\cos \th)\in\real\times [-1,1]$, then the billiard map 
is a twist map with $l$ as generating function, and it preserves the area form $dx\wedge dy$.\footnote{See \cite{tabachnikov2005geometry, siburg2004principle} for details.}

For the main text of this paper, we will only use the Bialy--Mironov generating function introduced in the next section. However, we do use the traditional generating function in the Appendix.

\subsection{Bialy--Mironov generating function \label{sec012}}
The following non--standard generating function for the billiard map has been discovered by Bialy and Mironov\cite{bialy2017angular} (see \fig~\ref{figgBMGef})

$$S(\f,\f')={2}\,h\left(\frac{\f+\f'}{2}\right)\sin\left(\frac{\f'-\f}{2}\right).$$
\begin{center}
\includegraphics[scale=1]{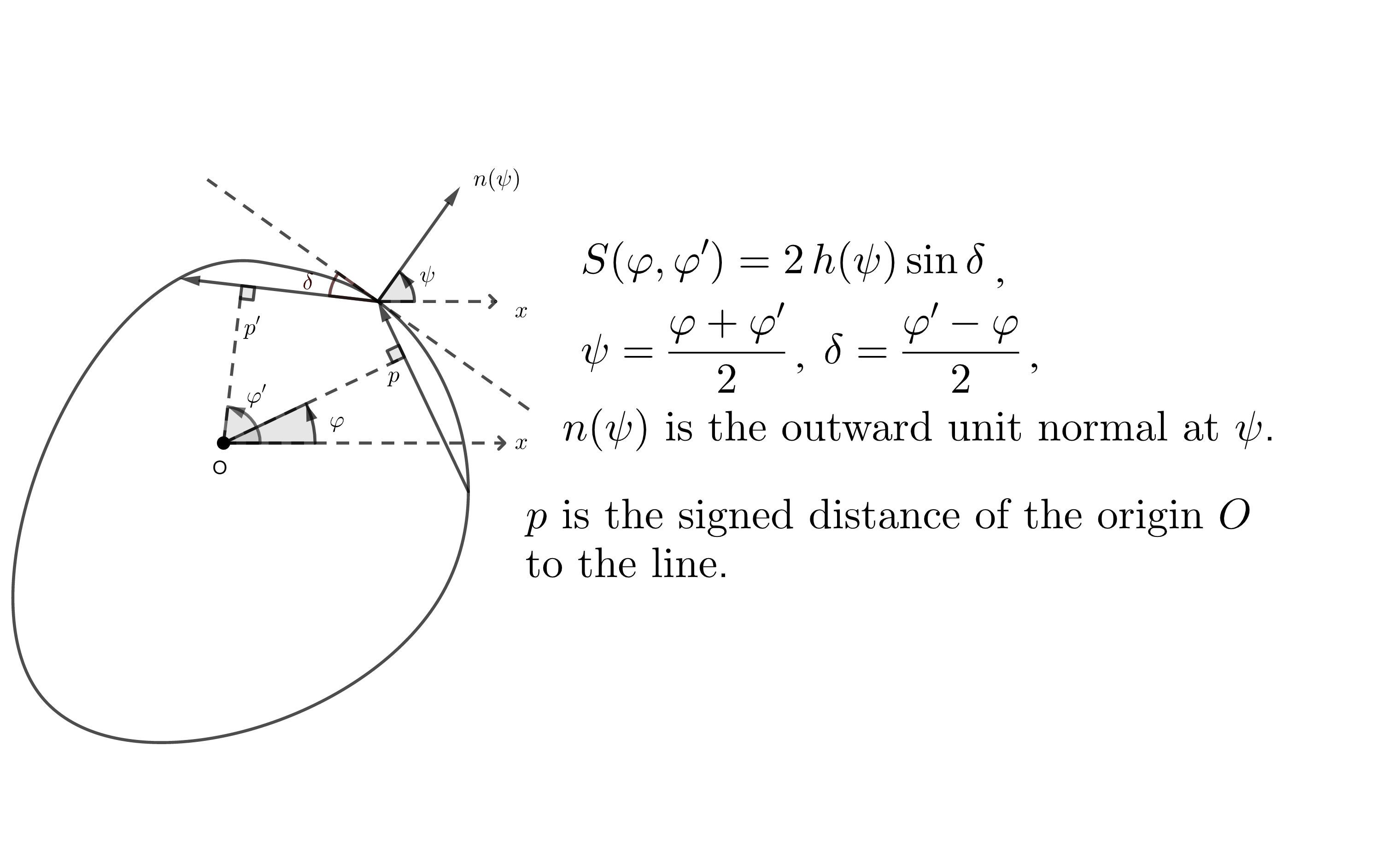} 
  \captionof{figure}{Non standard Bialy--Mironov generating function.} \label{figgBMGef} 
\end{center}

\noi
In the $(\f,p)$--coordinates, the billiard map $T$ reads: $T(\f,p)=(\f',p')$ \sss
\begin{equation}  \label{bm-gen}
\left\{
\begin{aligned}
p&= -S_1(\f,\f') = h(\psi) \cos \delta - h'(\psi) \sin \delta\\
p'&= S_2(\f,\f') = h(\psi) \cos \delta + h'(\psi) \sin \delta
\end{aligned}
\right.
\end{equation}
where $\psi = \frac12 (\varphi + \varphi')$, $\delta = \frac12 (\varphi' - \varphi)$, and $S_1\coloneqq \dpr_\f S$, $S_2\coloneqq \dpr_{\f'} S$, (for later use) $S_{12}\coloneqq \dpr_{\f \f'}^2 S$, \etc.

\noi
Denote by $\g_\vae$ the $\psi$--parametrization of $\dpr\O$ given by \equ{eqcartsup}, by $S_\vae$ its Bialy--Mironov generating function and by $T_\vae$ the billiard map in the coordinate $(\f,p)$.

\noi

{
		\subsection{Some estimates using the support function parametrization \label{sec013}}
}

Given $\vae_1 > 0$, define the space
\begin{equation}  \label{eq:V}
		\cV = \{h \in C^m(\T): \quad \|h\|_{C^m} < 2M, \quad \|h - h_0\|_{C^5} < \vae_1\}.  
\end{equation}
We assuem that $\vae_1$ is small enough so that the curvature function associated to $h \in \cV$ satsifies
\[
   k_\Omega(\psi) > \kappa_0/2, \quad \forall \psi \in \T. 
\]
Moreover, $\vae_1$ will be chosen so that Proposition \ref{prop:4-loop} applies, which depends only on $\kappa_0$, $|\partial \Omega_0|$, $M$. 

Throughout the proof we assume $h_\vae \in \cV$ and set
\[
		\vae = \|h_\vae - h_0\|_{C^5}.   
\]
We shall write $f = O_n(\vae)$ if $\|f\|_{C^n} \le C_1 \vae$ for a constant $C_1$ depending only on $\kappa_0$, $|\partial \Omega_0|$, $M$. In particular, $f = O_n(1)$ means $f$ has bounded $C^n$ norm. 

\noi
Denote by $s_\vae$ the arc--length parameter of $\dpr\O$. Then,
\lem{vapsis}
The change of parametrization $\psi\mapsto s_\vae(\psi)$ is a $C^{\cl-1}$--diffeomorphism with  inverse $s\mapsto \psi_\vae(s)$, and we have
\beq{psivaesvae0}
\psi_\vae = O_{m-1}(1), \quad
s_\vae = O_{m-1}(1),\quad  
\wt\psi_\vae=O_{4}(\vae),\quad \tilde s_\vae=O_{4}(\vae),
\eeq
where $\wt\psi_\vae\coloneqq \psi_\vae-\psi_0$ and $\tilde s_\vae\coloneqq s_\vae-s_0$.
\elem
\proof Indeed, \equ{eqspsi4} implies $s_\vae$ is a $C^{\cl-1}$--diffeomorphism and $s_\vae'=h_\vae+h_\vae''= h_0+h_0''+O_{3}(\vae)=s_0'+O_{3}(\vae)$. Thus,  $\wt s_\vae=O_{4}(\vae)$. Therefore, by the implicit function theorem, 
its inverse $\psi_\vae$ satisfies $ \tilde \psi_\vae=O_{4}(\vae)$.
\qed

\section{Proof of Theorem~\ref{Teo110}}

\subsection{Properties of $4$--periodic orbits in nearly--centrally--symmetric billiard tables \label{sec4perObt}}
In this section, we prove that the properties of the one parameter family of $4$--gons ``persists'' up to small corrections on tables which are close to a centrally--symmetric one and admits a $4$--rationally integrable caustic. 

{ 
\dfn{def:q-loop}
Given $q \ge 2$, an orbit \rwarning{ segment } of the billiard is called a \emph{q-loop orbit} if the \rwarning{segment} bounces $q$ times back to its starting point, winding around the table exactly once. 
\edfn

\rem{rm-q-loop}
A q-loop orbit is not necessarily a periodic orbit, as the final angle of incidence is not required to equal to the inital angle of reflection. 
\erem

{
		For a billiard $\Omega$ with parametrized boundary $\gamma$, the $q$-loop orbit starting at any $\gamma(\psi)$ always exists, by taking the maximum perimeter $q$-gon that starts and ends at $\gamma(\psi)$. It is unique if the billiard is nearly circular, see \cite{hezari2022one}. In Proposition \ref{prop:4-loop}, we show that the $4$-loop orbit exists for both the billiards $\Omega_0$ and $\Omega_\vae$. 

		\begin{proposition}\label{prop:4-loop}
				There exists $\vae_2 > 0$ and $C > 0$ depending only on $\kappa_0$ and $\|\gamma_0\|_{C^4}$, such that the following hold.

				Consider the space of closed curves $\gamma: \T \to \R^2$
				\[
						\cV_\gamma = \{\gamma:	\|\gamma - \gamma_0\|_{C^2} < \vae_2, \quad \|\gamma\|_{C^4} < C\},	
				\]
				then each $\gamma$ is the boundary of a convex billiard (since $\vae_2$ is small enough). Suppose there exists at least one $\gamma_\vae \in \cV_\gamma$ such that $\gamma_\vae$ admits a $4$-caustic, then for all $\gamma \in \cV_\gamma$, the corresponding billiard admits unique $4$-loop orbit starting at every point on the boundary.
		\end{proposition}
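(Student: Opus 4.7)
The idea is to reformulate the $4$-loop condition as a one-dimensional root-finding problem, prove uniqueness for $\gamma_\vae$ by exploiting its $4$-caustic, and then propagate the uniqueness to the whole family $\cV_\gamma$ via a connectedness argument. Concretely, for any $\gamma\in\cV_\gamma$ and any $\psi_0\in\T$, I would parametrize candidate $4$-loops starting at $\gamma(\psi_0)$ by the initial angle $\theta_0\in(0,\pi)$: writing $T_\gamma$ for the billiard map in $(\psi,\theta)$-coordinates lifted to the universal cover and $\pi_\psi$ for the projection onto the first coordinate, a $4$-loop of winding one corresponds exactly to a zero of
\[
\Phi_\gamma(\psi_0,\theta_0):=\pi_\psi\, T_\gamma^{\,4}(\psi_0,\theta_0)-\psi_0-2\pi.
\]
Since $\pi_\psi T_\gamma(\psi,\theta)-\psi$ is continuous and sweeps from $0$ to $2\pi$ as $\theta$ varies in $(0,\pi)$, the boundary limits are $\Phi_\gamma(\psi_0,0^+)=-2\pi$ and $\Phi_\gamma(\psi_0,\pi^-)=6\pi$, so existence of at least one $4$-loop is immediate from the intermediate value theorem.

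\smallskip

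For uniqueness I would proceed in two stages. Let $\mathcal{U}\subset\cV_\gamma$ denote the set of $\gamma$ admitting a unique $4$-loop from every boundary point. First I would verify that $\gamma_\vae\in\mathcal{U}$: the $4$-rationally integrable caustic $\mathscr C_\vae$ produces, through every point $\gamma_\vae(\psi_0)$, a unique $4$-periodic orbit tangent to $\mathscr C_\vae$, giving a distinguished zero $\theta_\ast(\psi_0)$ of $\Phi_{\gamma_\vae}(\psi_0,\cdot)$, and the general $(p,q)$-loop uniqueness statement established in Appendix~\ref{sec:d0} (applied with $(p,q)=(1,4)$) rules out any other zero. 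Secondly, since $\cV_\gamma$ is convex, hence path-connected, it suffices to show that $\mathcal{U}$ is both open and closed in $\cV_\gamma$. Openness follows from the implicit function theorem applied to $\Phi_\gamma=0$, together with a uniform twist-type lower bound $|\partial_{\theta_0}\Phi_\gamma(\psi_0,\theta_\ast(\psi_0))|\ge c>0$ derived from the curvature bound $k_\Omega>\kappa_0/2$ and the $C^{\,2}$-smallness defining $\cV_\gamma$, together with a compactness argument on the phase annulus $\mathbb A_\Omega$ to exclude the birth of spurious $4$-loops far from $\theta_\ast$. Closedness is a routine sequential argument: two distinct $4$-loops of a limit $\gamma$ would, by non-degeneracy, persist to produce two distinct $4$-loops for all sufficiently close $\gamma_n\in\mathcal{U}$, contradicting the defining property of $\mathcal{U}$.

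\smallskip

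The principal obstacle I anticipate is precisely the uniqueness step for $\gamma_\vae$ itself, which is the content of Appendix~\ref{sec:d0}. The caustic $\mathscr C_\vae$ immediately delivers one $4$-loop through each boundary point, but excluding non-tangent $4$-loops demands a global argument of action-minimizing or Morse-theoretic flavor that genuinely exploits the invariance of $\mathscr C_\vae$ together with the monotone twist property of $T_{\gamma_\vae}$; such information cannot be extracted from $C^{\,2}$-proximity to a generic centrally symmetric $\gamma_0$ alone, and has to be harvested from the caustic hypothesis on $\gamma_\vae$. Once this base case is in hand, everything else in the plan reduces to soft continuity arguments that are uniform in $\cV_\gamma$ thanks to the bounds $\|\gamma\|_{C^4}<C$ and $k_\Omega>\kappa_0/2$.
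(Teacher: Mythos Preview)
Your setup---reformulating the $4$-loop condition as the zero set of $\Phi_\gamma(\psi_0,\theta_0)=\pi_\psi T_\gamma^4(\psi_0,\theta_0)-\psi_0-2\pi$ and obtaining existence from the boundary values---matches the paper exactly. You also correctly identify that the hard step is uniqueness at $\gamma_\vae$, and that this requires the caustic in an essential way.

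Where you diverge is in the propagation to the whole of $\cV_\gamma$. The paper does \emph{not} use an open/closed connectedness argument. Instead, the result you invoke from Appendix~\ref{sec:d0} already contains the full perturbative statement: Lemma~\ref{twist} shows that for the map with the caustic, orbits strictly below (resp.\ above) the invariant curve satisfy $\Phi<0$ (resp.\ $\Phi>0$) with an explicit quantitative margin, and that $\partial_r\Phi>c>0$ in a fixed tube around the curve. These two estimates---a derivative lower bound near the zero and a sign bound away from it---are open conditions in $C^1(F)$, hence in $C^2(\gamma)$, so they persist for \emph{every} curve within a fixed $\vae$-ball of $\gamma_\vae$ and force uniqueness there directly (Propositions~\ref{prop:pq-loop-map} and \ref{prop:q-loop-billiard}). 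One then simply chooses $\vae_2$ so that all of $\cV_\gamma$ lies in this ball around $\gamma_\vae$. In effect, your ``openness'' step, done carefully, already yields a \emph{uniform} neighborhood and makes the closedness step superfluous.

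Your closedness argument as written also has a genuine gap. You assert that two distinct $4$-loops of a limit $\gamma$ would ``by non-degeneracy'' persist to nearby $\gamma_n$, but you have no a~priori nondegeneracy for the hypothetical second zero of $\Phi_\gamma(\psi_0,\cdot)$. Since $\Phi_\gamma$ runs from $-2\pi$ to $6\pi$, a configuration with exactly two zeros forces one of them to be tangent to the axis, and such a degenerate zero may disappear rather than persist under perturbation. This can be repaired (e.g.\ via a degree or sign-counting argument), but it is not ``routine'' as stated, and in any case it is unnecessary once you observe that the quantitative bounds from $\gamma_\vae$ propagate uniformly.
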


		Proposition \ref{prop:4-loop} follows from a more general result for $(p, q)$-loop orbit, for both twist map and billiards. The proof is presented in Appendix \ref{sec:d0}.
}


\dfn{def4loopang}
Assume the billiard $\Omega$ admits a unique $4$-loop orbit starting at any point on the boundary. We define $d_{\O}(\psi)\in (0,\pi)$ to be the unique initial angle for the unique 4-loop orbit of the billiard in $\Omega$ starting at the point $\gamma(\psi)$.(see \fig~\ref{figg4LopFun}). 
\edfn
}

\begin{center}
\includegraphics[scale=1.35]{4Gons-v3.png} 
  \captionof{figure}{The $4$--loop angle function $d_{\O_\vae}$ in $\dpr\O_\vae$.} \label{figg4LopFun} 
\end{center}

We can choose $\vae_1$ in \eqref{eq:V} small enough so that $h_\vae \in \cV$ implies $\gamma_\vae \in \cV_\gamma$. Therefore the $4$-loop angle functions $d_\vae$ and $d_0$ are both defined.

{ 
		The following lemma is derived from the proof of Proposition \ref{prop:4-loop}, whose proof is in Appendix \ref{sec:d0}.
		\begin{lemma}\label{lem:d-estimates}
				\begin{enumerate}[(i)]
						\item $d_\vae$ is $C^{m-1}$ with $d_\vae = O_{m-1}(1)$, and $\tilde{d}_\vae = d_\vae - d_0 = O_3(\vae)$.
						\item There exists $0 < \underline{d} < \overline{d} < \pi$ such that $\underline{d} < d_0, d_\vae < \overline{d}$.
				\end{enumerate}
		\end{lemma}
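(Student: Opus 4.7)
The plan is to realize $d_\vae(\psi_0)$ as arising from the unique critical point of the perimeter functional
\[
\cL_\vae(\psi_1,\psi_2,\psi_3;\psi_0) := S_\vae(\psi_0,\psi_1) + S_\vae(\psi_1,\psi_2) + S_\vae(\psi_2,\psi_3) + S_\vae(\psi_3,\psi_0),
\]
expressed through the Bialy--Mironov generating function. Existence and uniqueness of such a critical point for every $\psi_0 \in \T$ are precisely what Proposition~\ref{prop:4-loop} supplies: since $\Omega_\vae$ admits a $4$--rationally integrable caustic by hypothesis and we shrink $\vae_1$ in \eqref{eq:V} so that $\gamma_0,\gamma_\vae \in \cV_\gamma$, the proposition applies simultaneously to $\Omega_0$ and $\Omega_\vae$, so both $4$-loop orbits exist and are unique.

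Next I would apply the implicit function theorem to the system $\nabla_{(\psi_1,\psi_2,\psi_3)} \cL_\vae = 0$ at its unique solution. The Jacobian of this system is the standard twist matrix built out of $S_{12}^\vae$ evaluated along the $4$-loop; its invertibility is uniform because the reflection angles stay bounded away from $\{0,\pi\}$ (which is part (ii), handled below). Since $h_\vae \in C^m$ yields $S_\vae \in C^m$, the map $\nabla \cL_\vae$ is $C^{m-1}$, and IFT produces $\psi_i^\vae(\psi_0) \in C^{m-1}(\T)$ with bounds depending only on $(\kappa_0, |\partial\Omega_0|, M)$. The angle $d_\vae(\psi_0)$ is then computed from $\psi_1^\vae - \psi_0$ together with the boundary parametrization $\gamma_\vae$, and Lemma~\ref{vapsis} confirms $d_\vae \in C^{m-1}$ with $d_\vae = O_{m-1}(1)$.

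For the uniform two-sided bound in (ii) on $d_0$, I would note that the $4$-loop of $\Omega_0$ is a non-degenerate inscribed quadrilateral: strict convexity together with the fact that consecutive vertices are distinct forces every reflection angle to lie strictly in $(0,\pi)$. Continuity of the loop in $\psi_0$ plus compactness of $\T$ then give constants $0<\underline d < \overline d<\pi$ with $\underline d < d_0 < \overline d$. For $d_\vae$, I would run the parametric IFT: $h_\vae - h_0 = O_5(\vae)$ produces $\nabla\cL_\vae - \nabla\cL_0 = O_4(\vae)$ after one derivative loss, hence $\psi_i^\vae - \psi_i^0 = O_4(\vae)$ and therefore $\tilde d_\vae = O_4(\vae)$, which in particular is $O_3(\vae)$ as stated. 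Shrinking $\vae_1$ once more if necessary, this closeness preserves the bounds $\underline d < d_\vae < \overline d$ and completes both (i) and (ii).

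The only genuine obstacle lies inside Proposition~\ref{prop:4-loop}, namely the uniqueness of the $4$-loop orbit once a $4$-caustic is present (existence being easy via perimeter maximization); beyond that, Lemma~\ref{lem:d-estimates} reduces to a textbook IFT application with quantitative norm tracking, which I will record as a corollary of the proof of Proposition~\ref{prop:4-loop} in Appendix~\ref{sec:d0}.
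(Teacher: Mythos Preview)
Your plan is essentially the paper's own: both apply the implicit function theorem to the $4$-loop equation, draw the non-degeneracy from the proof of Proposition~\ref{prop:4-loop} (more precisely, from Proposition~\ref{prop:q-loop-billiard}), and read off the $C^{m-1}$ regularity and the $O_3(\vae)$ closeness. The paper carries this out in the traditional $(s,r)$ coordinates by solving the single scalar equation $\pi_1 F^4(s,r)=s+1$, which makes the IFT hypothesis exactly the twist bound $|\partial_r\pi_1 F^4|>c$ already supplied by Proposition~\ref{prop:q-loop-billiard}; your variational version in the Bialy--Mironov coordinates with the $3\times 3$ Hessian of $\cL_\vae$ is equivalent but slightly less direct.

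One point needs correcting. You justify invertibility of $\nabla^2_{(\psi_1,\psi_2,\psi_3)}\cL_\vae$ by saying the reflection angles stay bounded away from $\{0,\pi\}$. That only guarantees each off-diagonal entry $S_{12}$ is nonzero; it does not by itself force the tridiagonal Hessian to be nonsingular. What you actually need is the discrete Jacobi-field identity relating $\det\nabla^2\cL$ to $\partial_r\pi_1 F^4$ (up to a product of the $S_{12}$'s), together with the quantitative twist estimate \eqref{eq:Fq-twist} from Proposition~\ref{prop:q-loop-billiard}. Since you already plan to extract the lemma as a corollary of that proof, this is a matter of citing the correct non-degeneracy input rather than a structural gap; but the stated reason, as written, is insufficient. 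Likewise, part~(ii) in the paper comes directly from the bound $|\bar r\pm 1|>c$ in Proposition~\ref{prop:q-loop-billiard}, which avoids the mild circularity in your argument (using closeness from~(i) to get~(ii) for $d_\vae$, while using~(ii) to justify the IFT in~(i)).
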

}

We have the following.

\thmtwo{thm4.1BialMir}{Perturbative Theorem~4.1 \cite{bialy2022birkhoff}}\ \\
(i) $d_\vae(\psi+\pi)=d_\vae(\psi)+O_{3}(\vae)$ 
and each $4$--periodic billiard trajectory is $O_{2}(\vae)$--close to a parallelogram.\\
(ii) The tangent line to $\O$ at the vertices of any $4$--periodic billiard trajectory form a quadrilateral which is $O_{2}(\vae)$--close to a rectangle.\\
\rwarning{(iii) $d_\vae(\psi+\frac{\pi}{2})=\frac{\pi}{2}- d_\vae(\psi)+O_{2}(\vae)$.}\\
(iv) There exists $R_0>0$ such that
\begin{align}
& h_\vae^2(\psi)+h_\vae^2(\psi+\frac{\pi}{2})=R_0^2+O_{3}(\vae)\eqqcolon R_\vae^2\,,\label{reldh2}\\
& { h_\vae(\psi)=R_0\sin d_\vae (\psi)+O_{2}(\vae),\qquad h_\vae(\psi+\frac{\pi}{2})=R_0\cos d_\vae (\psi)+O_{2}(\vae)}\label{reldh1}\,.
\end{align}
\ethm

\proof
(i) Observe that, by Lemma~\ref{lem1}, the central symmetry of $\O_0$ implies $d_0(\psi+\pi)=d_0(\psi)$. Then, by Lemma~\ref{lem:d-estimates}, we have
$$
d_\vae(\psi+\pi)=d_0(\psi+\pi)+\tilde d_\vae(\psi+\pi)= d_0(\psi)+\tilde d_\vae(\psi+\pi)= 
$$
$$
\qquad \qquad  \qquad d_\vae(\psi)+\tilde d_\vae(\psi+\pi)-\tilde d_\vae(\psi)=d_\vae(\psi)+O_{3}(\vae).
$$
Next, we show the last part of (i): writing
\beq{ceNTsYm0}
\Psi_\vae^2(\psi)=\psi+\pi+f(\psi), 
\eeq
it is enough to show
\beq{ceNTsYm}
 \qquad f=O_{2}(\vae).
\eeq
Indeed,  by Lemma~\ref{lem19}
\begin{align*}
\Psi_\vae^3(\psi)&=\Psi_\vae(\psi+\pi+f(\psi))\\
	&=\Psi_\vae(\psi+\pi)+\int_0^1\Psi_\vae'(\psi+\pi+t f(\psi))dt\cdot f(\psi)\\
	&= \Psi_\vae(\psi)+\pi+\wt{\Psi}_\vae(\psi+\pi)-\wt{\Psi}_\vae(\psi)+\int_0^1\Psi_\vae'(\psi+\pi+t f(\psi))dt\cdot f(\psi)\\
	&\eqqcolon \Psi_\vae(\psi)+\pi+f_1(\psi),
\end{align*}
and
\begin{align*}
\psi+2\pi&=\Psi_\vae^4(\psi)\\
	&=\Psi_\vae(\Psi_\vae(\psi)+\pi+f_1(\psi))\\
	&=\Psi_\vae(\Psi_\vae(\psi)+\pi)+\int_0^1\Psi_\vae'(\Psi_\vae(\psi)+\pi+t f_1(\psi))dt\cdot f_1(\psi)\\
	&= \Psi_\vae^2(\psi)+\pi+\wt{\Psi}_\vae(\Psi_\vae(\psi)+\pi)-\wt{\Psi}_\vae(\Psi_\vae(\psi))+\int_0^1\Psi_\vae'(\Psi_\vae(\psi)+\pi+t f_1(\psi))dt\cdot f_1(\psi)\\
	&= \psi+\pi+f(\psi)+\pi+\wt{\Psi}_\vae(\Psi_\vae(\psi)+\pi)-\wt{\Psi}_\vae(\Psi_\vae(\psi))+\int_0^1\Psi_\vae'(\Psi_\vae(\psi)+\pi+t f_1(\psi))dt\cdot f_1(\psi)\\
	&\eqqcolon \psi+2\pi+f_2(\psi),
\end{align*}
\ie
$$
f_2(\psi)\equiv 0,
$$
where
\begin{align}
f_1(\psi)&\coloneqq \wt{\Psi}_\vae(\psi+\pi)-\wt{\Psi}_\vae(\psi)+\int_0^1\Psi_\vae'(\psi+\pi+t f(\psi))dt\cdot f(\psi),\label{formf1}\\
f_2(\psi)&\coloneqq f(\psi)+\wt{\Psi}_\vae(\Psi_\vae(\psi)+\pi)-\wt{\Psi}_\vae(\Psi_\vae(\psi))+\int_0^1\Psi_\vae'(\Psi_\vae(\psi)+\pi+t f_1(\psi))dt\cdot f_1(\psi)\nonumber\\
	&= \bigg(1+ \Psi_\vae'(\Psi_\vae(\psi)+\pi+t_1 f_1(\psi))\cdot \Psi_\vae'(\psi+\pi+t_2 f(\psi))\bigg)f(\psi)+\nonumber\\
	& + \wt{\Psi}_\vae(\Psi_\vae(\psi)+\pi)-\wt{\Psi}_\vae(\Psi_\vae(\psi))+(\wt{\Psi}_\vae(\psi+\pi)-\wt{\Psi}_\vae(\psi))\Psi_\vae'(\Psi_\vae(\psi)+\pi+t_3 f_1(\psi)), \label{formf2}
\end{align}
for some $t_1,t_2,t_3\in (0,1)$.


\noi
Hence, as  
$0\equiv f_2(\psi)$, 
it follows
\begin{align*}
&-\bigg(1+ \Psi_\vae'(\Psi_\vae(\psi)+\pi+t_1 f_1(\psi))\cdot \Psi_\vae'(\psi+\pi+t_2 f(\psi))\bigg)f(\psi)= 
\\
& \qquad \wt{\Psi}_\vae(\Psi_\vae(\psi)+\pi)-\wt{\Psi}_\vae(\Psi_\vae(\psi))+\\
&\hspace{2cm}	\qquad+(\wt{\Psi}_\vae(\psi+\pi)-\wt{\Psi}_\vae(\psi))\cdot \Psi_\vae'(\Psi_\vae(\psi)+\pi+t_3 f_1(\psi))\,.
\end{align*}
By Lemma~\ref{lem19}, $\wt{\Psi}_\vae=O_{3}(\vae)$. 
Now, observe that $\Psi_\vae$ is strictly increasing by the strict convexity of the caustic $\mathscr C$. Then, $\Psi_\vae'>0$ and  
{
\beq{PSiWtdeRv}
1+ \Psi_\vae'(\psi_1)\cdot \Psi_\vae'(\psi_2)> 1,\qquad \forall\, \psi_1,\psi_2\in \torus,
\eeq
%
}
Thus, the Fa\`a di Bruno's formula yields \equ{ceNTsYm} by induction.

\noi
(ii) Indeed,
\beq{eqrEcT}
d_\vae(\Psi_\vae^2(\psi))\eqby{ceNTsYm0} d_\vae(\psi+\pi+f_1(\psi))\overset{\equ{formf1},\equ{ceNTsYm},\equ{estPsiVaE}}= d_\vae(\psi+\pi)+ O_{2}(\vae)\overset{(i)}=d_\vae(\psi)+ O_{2}(\vae),
\eeq
so that, as in \cite{bialy2022birkhoff},
\begin{align*}
2\pi&=2\left(d_\vae(\psi)+d_\vae(\Psi_\vae(\psi))+d_\vae(\Psi_\vae^2(\psi))+d_\vae(\Psi_\vae^3(\psi))\right)\\
	&\eqby{eqrEcT} 4\left(d_\vae(\psi)+d_\vae(\Psi_\vae(\psi))\right)+O_{2}(\vae)
\end{align*}
\ie 
\beq{RectY0}
d_\vae(\psi)+d_\vae(\Psi_\vae(\psi))= \frac{\pi}{2}+O_{2}(\vae),
\eeq
proving (ii).

\noi
(iii)
Recalling that $\psi$ is the angle formed by the outer unit normal of $\dpr\O_\vae$ at $\psi$ with the $x$--axis, \equ{RectY0} then implies
\beq{RectY06}
\Psi_\vae(\psi)= \psi+\frac{\pi}{2}+O_{2}(\vae),
\eeq
which, together with \equ{RectY0} imply the last assertion in (iii). 

\noi
{Next, we prove $0<\uvl d\le d_\vae(\psi)\leq \ovl d<\pi/2$.} 
Indeed, $d_\vae$ being continuous on the compact $\torus$ then, it admits a minimum and a maximum \ie there are $\psi_{min},\psi_{max}\in\torus$ \st $d_\vae(\psi_{min})\le d_\vae(\psi)\le d_\vae(\psi_{max})$, forall $\psi\in\torus$. Obviously, $d_\vae(\psi)>0$ forall $\psi\in\torus$. Thus, $d_\vae(\psi_{min})>0$, and $0<d_\vae(\psi_{max}+\frac{\pi}{2})=\frac{\pi}{2}- d_\vae(\psi_{max})+O_{3}(\vae)$ which implies that $\frac{\pi}{2}- d_\vae(\psi_{max})>0$, concluding the proof of (iii). 
\newline

\noi
(iv) First of all, by Lemma~\ref{lem1}, 
\beq{hepUn8}
h_\vae(\psi+\pi)=h_0(\psi+\pi)+\tilde h_\vae(\psi+\pi)=h_0(\psi)+\tilde h_\vae(\psi+\pi)=h_\vae(\psi)+\tilde h_\vae(\psi+\pi)-\tilde  h_\vae(\psi).
\eeq
Then, just as in \cite{bialy2022birkhoff}, we obtain on one hand
\begin{align*}
h_\vae(\psi)\cos d_\vae(\psi)&+h_\vae'(\psi)\sin d_\vae(\psi) \\
                             &\eqby{bm-gen} h_\vae(\Psi_\vae(\psi))\cos d_\vae(\Psi_\vae(\psi))-h_\vae'(\Psi_\vae(\psi))\sin d_\vae(\Psi_\vae(\psi))\\
&\eqby{RectY06} h_\vae(\psi+\frac{\pi}{2})\cos d_\vae(\psi+\frac{\pi}{2})-h_\vae'(\psi+\frac{\pi}{2})\sin d_\vae(\psi+\frac{\pi}{2})+O_{2}(\vae)\,,
\end{align*}
and, on the other hand
\begin{align*}
& h_\vae(\psi+\frac{\pi}{2})\cos d_\vae(\psi+\frac{\pi}{2})-h_\vae'(\psi+\frac{\pi}{2})\sin d_\vae(\psi+\frac{\pi}{2})+O_{2}(\vae)\eqby{RectY06}\\
&\hspace{3cm}\eqby{RectY06} h_\vae(\Psi_\vae(\psi))\cos d_\vae(\Psi_\vae(\psi))-h_\vae'(\Psi_\vae(\psi))\sin d_\vae(\Psi_\vae(\psi))\\
&\hspace{3cm}\eqby{bm-gen} h_\vae(\Psi_\vae^2(\psi))\cos d_\vae(\Psi_\vae^2(\psi))+h_\vae'(\Psi_\vae^2(\psi))\sin d_\vae(\Psi_\vae^2(\psi))\\
&\hspace{3cm}\overset{\equ{ceNTsYm0},\equ{ceNTsYm}}= h_\vae(\psi+\pi)\cos d_\vae(\psi+\pi)-h_\vae'(\psi+\pi)\sin d_\vae(\psi+\pi)+O_{2}(\vae)\\
&\hspace{3cm} \overset{\equ{hepUn8}}=h_\vae(\psi)\cos d_\vae(\psi+\pi)-h_\vae'(\psi)\sin d_\vae(\psi+\pi)+O_{2}(\vae)\\
&\hspace{3cm} \overset{(i)}=h_\vae(\psi)\cos d_\vae(\psi)-h_\vae'(\psi)\sin d_\vae(\psi)+O_{2}(\vae)\,,
\end{align*}
and, summing and subtracting up yields
$$
\left\{
\begin{aligned}
&h_\vae(\psi+\frac{\pi}{2})\cos d_\vae(\psi+\frac{\pi}{2})=h_\vae(\psi)\cos d_\vae(\psi)+O_{2}(\vae)\,,\\
&h_\vae'(\psi+\frac{\pi}{2})\sin d_\vae(\psi+\frac{\pi}{2})=-h_\vae'(\psi)\sin d_\vae(\psi)+O_{2}(\vae)\,,
\end{aligned}
\right.
$$
which, combined with $d_\vae(\psi+\frac{\pi}{2})=\frac{\pi}{2}- d_\vae(\psi)+O_{2}(\vae)$ (\textit{cf.} (iii)) yields

\beq{uLtG}
\left\{
\begin{aligned}
&h_\vae(\psi+\frac{\pi}{2})\sin d_\vae(\psi)=h_\vae(\psi)\cos d_\vae(\psi)+O_{2}(\vae)\,,\\
&h_\vae'(\psi+\frac{\pi}{2})\cos d_\vae(\psi)=-h_\vae'(\psi)\sin d_\vae(\psi)+O_{2}(\vae)\,.
\end{aligned}
\right.
\eeq
{Now, multiplying the two equations in \equ{uLtG} side--wise, we obtain
$$
\left(h_\vae(\psi+\frac{\pi}{2})h_\vae'(\psi+\frac{\pi}{2})+h_\vae(\psi)h_\vae'(\psi)\right)\sin 2d_\vae(\psi)=O_{2}(\vae)\,,
$$
which together with
$$
\sin 2d_\vae(\psi)=\sin 2d_0(\psi)+2\tilde{d}_\vae(\psi)\int_0^1\cos(2d_0(\psi)+t  2\tilde{d}_\vae(\psi))dt\overset{Lemma~\ref{lem:d-estimates}}=\sin 2d_0(\psi)+O_{3}(\vae)\,,
$$
imply
\begin{align*}
\su2 \left(h_\vae^2(\psi)+h_\vae^2(\psi+\frac{\pi}{2})\right)'=h_\vae'(\psi)h_\vae(\psi)+h_\vae'(\psi+\frac{\pi}{2})h_\vae(\psi+\frac{\pi}{2})
= O_{2}(\vae),
\end{align*}
proving \equ{reldh2}, \rwarning{$R_0$ being the constant of integration from $0$ to $\psi$}.\\
\noi
From \equ{reldh2}, it follows 
\beq{bfdvEa}
h_\vae(\psi)=R_\vae \sin \mathbf{d}_\vae(\psi),\qquad h_\vae(\psi+\frac{\pi}{2})=R_\vae \cos \mathbf{d}_\vae(\psi),
\eeq
for some $\mathbf{d}_\vae(\psi)\in [0,\pi/2]$ since $h_\vae>0$. Since $h_\vae>0$, \equ{bfdvEa} yields $\mathbf{d}_\vae(\psi)=-i\log\su{R_\vae}(h_\vae(\psi+\frac{\pi}{2})+i h_\vae(\psi))$ and, hence, $\mathbf{d}_\vae(\psi)$ is $C^{\eme+1}$--smooth. Now, plugging \equ{bfdvEa} in the first equation in  \equ{uLtG}, we obtain $R_\vae \sin (\mathbf{d}_\vae-d_\vae)=O_{2}(\vae)$ and, by definition of $R_\vae$ in \equ{reldh2}, yields $ \sin (\mathbf{d}_\vae-d_\vae)=O_{2}(\vae)$. Then , by continuity, $\mathbf{d}_\vae(\psi)-d_\vae(\psi)\eqby{reldh2}n_\vae\pi+ O_{2}(\vae)$, for some $n_\vae\in\integer$. 
It turns out that $n_\vae=0$ since $d_\vae(\psi),\mathbf{d}_\vae(\psi)\in [0,\pi/2]$. Hence, plugging $\mathbf{d}_\vae(\psi)=d_\vae(\psi)+ O_{2}(\vae)$ in \equ{bfdvEa} and recalling the definition of $R_\vae$ as in \equ{reldh2}, we obtain \equ{reldh1}.
} 
\qed

\subsection{Nearly--centrally--symmetry and $3$--rational integrability imply closeness to an ellipse\label{secNCSPls3RI}}
{
Denote
\gwarning{$$
U_\vae\coloneqq -h_\vae(h_\vae')^2(h_\vae+h_\vae'')\left(\su 2 d_\vae-\su 4\sin 2d_\vae \right)+(h_\vae''h_\vae^2+3h_\vae(h_\vae')^2)(h_\vae+h_\vae'')\left(\su 8 d_\vae-\su {32}\sin 4d_\vae \right).
$$
}
We have the following statement.
\pro{} (see \cite{bialy2022birkhoff})
Suppose for the billiard $\Omega_\vae$, the part of the phase space between the $4$-caustic and the boundary is foliated by $C^0$--rotational invariant curves, then 
\beq{EqIntBmR}
\int_0^{\warning{2\pi}} U_\vae\,d\psi\le 0.
\eeq
\epro
\proof
We refer to \cf \cite[Eq.~(19)]{bialy2022birkhoff}, noting that the proof in \cite[section~5.1]{bialy2022birkhoff}  there up to this formula does not use the central symmetry assumption. 
\qed
The assumption of $C^0$-integrability can be replaced by rational integrability.
\pro{} (See Proposition \ref{prop:rat-int})
The billiard $\Omega$ is $q_0$-rationally integrable if and only if the part of the phase space between the $q_0$-caustic and the boundary is foliated by $C^0$--rotational invariant curves.
\epro
}

We now proceed with \eqref{EqIntBmR}.
\lemtwo{lem5.1BialMir}{Perturbative Lemma~5.1\cite{bialy2022birkhoff}}\ \\
\beq{RedWirtBiaMir9}
\int_0^{2\pi} U_\vae(\psi)d\psi=\frac{\pi R_0^4}{512}\int_0^{2\pi} ((\m_\vae'')^2-4(\m_\vae')^2)d\psi+O(\vae),
\eeq
with $\m_\vae(\psi)=\cos(2d_\vae(\psi))$.
\elem

\proof 
In our setting, \cite[Eq.~(20)]{bialy2022birkhoff} reads
\beq{Eq20BilMir}
\left\{
\begin{aligned}
&h_\vae=R_0\sin d_\vae+O_{2}(\vae)\,,\\
&h_\vae'=R_0\cos d_\vae\, d_\vae'+O_{1}(\vae)\,,\\
&h_\vae''=R_0\cos d_\vae\, d_\vae''-R_0\sin d_\vae (d_\vae')^2+O_{ }(\vae)\,,\\
&d_\vae(\psi+\frac{\pi}{2})=\frac{\pi}{2}- d_\vae(\psi)+O_{2}(\vae)\,.
\end{aligned}
\right.
\eeq
Thus, mimicking the proof of \cite[Lemma~5.1]{bialy2022birkhoff}, we obtain \equ{RedWirtBiaMir9}.
\qed
\newline
\rwarning{Writing $\mu_\vae$, $\mu_0$ in Fourier series as $\mu_\vae\eqqcolon \sum_{j\in\integer} \hat{\m}_{\vae,j}\ex^{ij\psi}$ and $\mu_0 \eqqcolon \sum_{j\in\integer} \hat{\m}_{0,j}\ex^{ij\psi}$, we have:}
\lem{ClsElps} We have $\m_\vae-(\hat{\m}_{0,-2}\ex^{-i2\psi}+\hat{\m}_{0,2}\ex^{i2\psi})=O_1(\sqrt\vae)$.
\elem

\proof Indeed, by \equ{RedWirtBiaMir9} and \equ{EqIntBmR}, we have, for some $C>0$
\beq{EqUtns}
\int_0^{2\pi} ((\m_\vae'')^2-4(\m_\vae')^2)d\psi\le C\,\vae.
\eeq
 Since $\m_0=\cos(2d_0)$ is $\pi$--periodic\footnote{Recall that $d_0$ is $\pi$--periodic.}, we have $\hat{\m}_{0,2k+1}=0$ for all $k\in\integer$ so that
 \rwarning{
\begin{align}
|\hat{\m}_{\vae,\pm 1}|&= |\hat{\m}_{\vae,\pm 1}-\hat{\m}_{0,\pm 1}|=|\int_0^{2\pi} (\m_{\vae}-\m_{0})(\psi)\ex^{\pm i\psi} d\psi|\le 2\pi\|\m_{\vae}-\m_{0}\|_{C^0}=\nonumber\\
	&=4\pi\|\sin(d_\vae-d_0)\sin(d_\vae+d_0)\|_{C^0}\le 4\pi\|\sin(d_\vae-d_0)\|_{C^0} \overset{Lemma~\ref{lem:d-estimates}}=O(\vae).\label{mEpspm1}
\end{align}
}
\noi
Thus, 
\rwarning{
\begin{align*}
\sum_{k\in \integer\setminus\{0,\pm 2\}} k^4|\hat{\m}_{\vae,k}|^2&=(|\hat{\m}_{\vae,-1}|^2+|\hat{\m}_{\vae,1}|^2)+ \sum_{|k|\ge 3} k^4|\hat{\m}_{\vae,k}|^2\\
	&\le (|\hat{\m}_{\vae,-1}|^2+|\hat{\m}_{\vae,1}|^2)+ \sum_{|k|\ge 3} (k^2-4)k^2|\hat{\m}_{\vae,k}|^2\\
	&= 4(|\hat{\m}_{\vae,-1}|^2+|\hat{\m}_{\vae,1}|^2)+ \sum_{k\in \integer} (k^2-4)k^2|\hat{\m}_{\vae,k}|^2\\
	&=4(|\hat{\m}_{\vae,-1}|^2+|\hat{\m}_{\vae,1}|^2)+\su{2\pi}\int_0^{2\pi} ((\m_\vae'')^2-4(\m_\vae')^2)d\psi \\
	&\qquad\quad (\mbox{by Parseval’s identity})\\
	&\overset{\equ{EqUtns},\equ{mEpspm1}}\le C\,\vae\,,
\end{align*}
}
\ie $\m_\vae-(\hat{\m}_{\vae,0}+\hat{\m}_{\vae,-2}\ex^{-i2\psi}+\hat{\m}_{\vae,2}\ex^{i2\psi})\in H^2(\torus)$ and $\|\m_\vae-(\hat{\m}_{\vae,0}+\hat{\m}_{\vae,-2}\ex^{-i2\psi}+\hat{\m}_{\vae,2}\ex^{i2\psi})\|_{H^2}\le C\,\sqrt{\vae}$. Therefore, Sobolev's embedding theorem yields  $\m_\vae-(\hat{\m}_{\vae,0}+\hat{\m}_{\vae,-2}\ex^{-i2\psi}+\hat{\m}_{\vae,2}\ex^{i2\psi})=O_1(\sqrt\vae)$. To conclude the proof, we show $\hat{\m}_{\vae,0}=O(\vae)$ and $\hat{\m}_{\vae,\pm 2}=\hat{\m}_{0,\pm 2}+O(\vae)$. Indeed, since (recall \equ{reldh1}) 

$$
R_0^2\m_\vae=R_0^2\cos 2d_\vae(\psi)= R_0^{2}(\cos^2d_\vae-\sin^2d_\vae)= h_\vae^2(\psi+\frac{\pi}{2})-h_\vae^2(\psi)+O_{2}(\vae),
$$
We obtain, taking the average over $[0,2\pi]$, $\hat{\m}_{\vae,0}=O(\vae)$. 

\noi
Finally, for any $k\in \integer$, $|\hat{\m}_{\vae,k}-\hat{\m}_{0,k}|\le \|\m_{\vae}-\m_{0}\|_{C^0}=O(\vae)$. 
\qed

\noi
Denote by $\elips_0$ the ellipse given by the support function
$$
\check {h}_0\coloneqq \su{\sqrt 2}  {R_0} \sqrt{1-(\hat{\m}_{0,-2}\ex^{-i2\psi}+\hat{\m}_{0,2}\ex^{i2\psi})}
\,,$$ 
 by  $\check {d}_0$ its $4$--loop angle function. Let $\check {\m}_0=\cos (2\check {d}_0)$ and $\check {h}_\vae\coloneqq {h}_\vae$. Let $0\le e_0< 1$ be the eccentricity and $\mathfrak{c}_0$ the semi--focal distance of the ellipse $\elips_0$, and by $\check{\g}_0$ its $\psi$--parametrization. 
We are in position to complete the present section by proving that $\O_\vae$ is close  to the ellipse $\elips_0$.

\noi
Observe that (\cf \cite[Eq.~(20)]{bialy2022birkhoff})

\beq{Eq20BilMirhat}
\left\{
\begin{aligned}
&\check  h_0=\check  R_0\sin \check  d_0\,,\\
&\check  h_0'=\check  R_0\cos \check  d_0\, \check  d_0'\,,\\
&\check  h_0''=\check  R_0\cos \check  d_0\, \check  d_0''-\check  R_0\sin \check  d_0 (\check  d_0')^2\,,\\
&\check  d_0(\psi+\frac{\pi}{2})=\frac{\pi}{2}- \check  d_0(\psi)\,.
\end{aligned}
\right.
\eeq
for some $\check {R}_0>0$.

\lem{ClsElpssptF} {\bf (i)} $\check h_\vae=\check  h_0+O_1(\sqrt\vae)$.

\noi
{\bf (ii)} $h_\vae - \check{h}_0 = O_2(\vae^{\frac18})$.
\elem
\proof
{\bf (i)} Indeed, recalling Lemma~\ref{ClsElps}, we have
 \begin{align*}
 h_\vae^2&\eqby{Eq20BilMir} R_0^2\sin^2 d_\vae+O_{2}(\vae)\\
 	&=\frac{R_0^2}{2}(1-\cos 2d_\vae)+O_{2}(\vae)\\
 	&=\frac{R_0^2}{2}(1-\m_\vae)+O_{2}(\vae)\\
 	&=\check  h_0^2+O_1(\sqrt\vae),
 \end{align*}
 hence
\beq{hhathpart}
h_\vae=\check  h_0+O_1(\sqrt\vae).
\eeq 

\noi
{\bf (ii)} We apply Lemma \ref{lem:interpolation} with $f = \check{h}_\vae$, $\delta = \sqrt{\vae}$, $l = m - 1$ to get the claim.
\qed

\lem{gEpschG0}
$\g_\vae\in C^{m - 1}(\torus)$ with $\gamma_\vae = O_{m-1}(1)$ and $\g_\vae-\check{\g}_0 = O_1(\vae^{\frac18})$.
\elem

\proof Since
$$
\g_\vae(\psi)=h_\vae(\psi)\begin{pmatrix}
\cos \psi\\
\sin \psi
\end{pmatrix}+
h_\vae'(\psi)\begin{pmatrix}
-\sin \psi\\
\cos \psi
\end{pmatrix},
\qquad 
$$
For some uniform constant $C$
 \[
		 \|\gamma_\vae\|_{C^{m-1}} \le C \|h_\vae\|_{C^m},
 \]
 and 
\beq{RdsOp}
\|\g_\vae-\check  \g_0\|_{C^1} \le C \|h_\vae - \check{h}_0\|_{C^2} = O(\vae^{\frac18}).
\eeq
\qed

\subsection{Proof of Theorem~\ref{Teo110}  using \cite{kaloshin2018local}}

\noi
For arbitrary eccentricity $0\le e_0<1$ of the ellipse $\elips_0$, we shall apply the following local Birkhoff conjecture result \cite[Main Theorem]{kaloshin2018local}. For, we first introduce the so--called elliptic polar coordinates.

{
		\subsubsection{Elliptic polar coordinates \label{ElipPolCoo}}
}

Assume that the ellipse $\elips_0$ has semi-focal distance $\fc_0$ and eccentricity $e_0$. We consider the elliptic polar coordinates adapted to this family, given by
\[
		\begin{bmatrix}
				x \\ y
		\end{bmatrix}
		= \Phi(\lambda, \theta)
		= \fc_0 
		\begin{bmatrix}
			\cosh \lambda \cos \theta \\ \sinh \lambda \sin \theta
		\end{bmatrix} : \quad
		[0, \infty) \times \T \to \R^2.
\]
  $\Phi$ is a diffeomorphism from $(0, \infty) \times \T$ to its image. In these coordinates, the ellipse $\elips_0$ is given by
\[
		\{(\lambda_0, \theta): \, \theta \in \T\}  
\]
where $e_0 = 1/\cosh(\lambda_0)$. 

Let $\check\gamma_0(\psi)$ be the parametrization for $\elips_0$ in the support function parameter, and as before $\gamma_\vae = \gamma_\vae(\psi)$ is the paremetrization of $\Omega_\vae$, and recall $\gamma_\vae = O_{m-1}(1)$, $\gamma_\vae - \check\gamma = O_1(\vae)$.
\begin{lemma}\label{lem:elliptic-cor}
		There exists $\vae_2 > 0$ depending only on $M$ and $\check\gamma_0$ such that if
		\[
				\|\gamma_\vae - \check\gamma_0\|_{C^1} < \vae_2, 
		\]
		 there exists a $C^m$ function $\lambda_\vae: \T \to \R$ such that $\gamma_\vae$ is represented as the graph of $\lambda_\vae$ in elliptic polar coordinates, i.e.
		\[
				\left\{ (\lambda_\vae(\theta), \theta) : \, \theta \in \T\right\}
				= \{\Phi^{-1} \circ \gamma_\vae(\psi) : \, \psi \in \T\}.
		\]
		Moreover,
		\[
				\lambda_\vae = O_{m-1}(1), \quad
				\lambda_\vae {-\lambda_0} = O_1(\vae^{\frac18}).
		\]
\end{lemma}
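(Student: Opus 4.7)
The plan is to pull back the boundary parametrization through the chart $\Phi^{-1}$, show that the resulting curve in $\T \times (0,\infty)$ is a graph over $\theta$, and then read off the estimates. For the reference ellipse we have $\Phi^{-1} \circ \check\g_0(\psi) = (\lambda_0, \theta_0(\psi))$, where $\lambda_0$ is the constant with $e_0 = 1/\cosh\lambda_0$, and $\theta_0 : \T \to \T$ is a $C^{m-1}$ diffeomorphism whose derivative is bounded away from zero by a constant $c > 0$ depending only on $\check\g_0$. This positivity is where the geometric hypotheses enter: $\check\g_0'(\psi)$ is nonvanishing (regular parametrization of the ellipse), and since $\elips_0$ is strictly convex and encloses $O$ together with its focal segment, its tangent at each point is transverse to the radial direction from the focus, which under $d\Phi^{-1}$ translates to nonvanishing of the $\theta$-component of $(\Phi^{-1}\circ\check\g_0)'$.

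The map $\Phi$ is a diffeomorphism from $(0,\infty) \times \T$ onto $\R^2 \setminus ([-\fc_0, \fc_0] \times \{0\})$. The focal segment lies strictly inside $\elips_0$, hence (for $\vae_2$ small, using $\|\g_\vae - \check\g_0\|_{C^0} < \vae_2$) strictly inside $\Omega_\vae$, so $\Phi^{-1} \circ \g_\vae$ is well defined and $C^{m-1}$. Writing $(\lambda(\psi), \theta(\psi)) \coloneqq \Phi^{-1} \circ \g_\vae(\psi)$ and applying Lemma~\ref{gEpschG0} together with the smoothness of $\Phi^{-1}$ on a uniform tubular neighborhood of $\check\g_0(\T)$, we obtain
\[
\lambda - \lambda_0 = O_1(\vae^{1/8}), \qquad \theta - \theta_0 = O_1(\vae^{1/8}),
\]
and $\lambda,\theta = O_{m-1}(1)$. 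Consequently $\theta'(\psi) \ge \theta_0'(\psi) - O(\vae^{1/8}) \ge c/2$ for $\vae_2$ sufficiently small, so $\theta : \T \to \T$ is a $C^{m-1}$ diffeomorphism with a uniformly $C^{m-1}$-bounded inverse $\psi_\vae$. Setting $\lambda_\vae(\theta) \coloneqq \lambda(\psi_\vae(\theta))$ then realizes the image of $\Phi^{-1}\circ \g_\vae$ as the graph $\{(\lambda_\vae(\theta), \theta): \theta \in \T\}$.

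The regularity bound $\lambda_\vae = O_{m-1}(1)$ follows by Fa\`a di Bruno applied to $\lambda \circ \psi_\vae$, since both factors are uniformly bounded in $C^{m-1}$ with $\theta'$ (the denominator appearing in the chain rule for $\psi_\vae$) uniformly bounded away from zero. For the closeness estimate, since $\lambda_0$ is a constant,
\[
\lambda_\vae(\theta) - \lambda_0 = (\lambda - \lambda_0)(\psi_\vae(\theta)),
\]
which is $O_1(\vae^{1/8})$ from the displayed $C^1$-bound on $\lambda - \lambda_0$ together with the uniform $C^1$-bound on $\psi_\vae$. The main technical input is the uniform lower bound on $\theta_0'$; once that is in hand, the remainder is a standard application of the inverse function theorem and chain rule at quantitative constants depending only on $M$ and $\check\g_0$, so no serious obstacle arises.
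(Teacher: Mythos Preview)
Your proof is correct and follows essentially the same route as the paper: pull back $\gamma_\vae$ through $\Phi^{-1}$ to obtain $(\lambda(\psi),\theta(\psi))$, compare with the reference pair $(\lambda_0,\theta_0(\psi))$ for $\check\gamma_0$, use $C^1$-closeness plus the uniform lower bound on $\theta_0'$ to conclude that $\theta$ is a diffeomorphism, and set $\lambda_\vae = \lambda\circ\theta^{-1}$. The paper's argument is the same up to notation (it writes $\bar\lambda_\vae,\bar\theta_\vae$ for your $\lambda,\theta$), and it justifies the lower bound on $\theta_0'$ simply by observing that the change from $\psi$ to arclength and from arclength to $\theta$ on the ellipse are each $C^\infty$ diffeomorphisms, rather than through your transversality remark --- but the content is identical.
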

\proof
		Assume that $\vae_2$ is chosen such that the curve $\gamma_\vae$ is contained in a compact tubular neighborhood of $\elips_0$. Throughout the proof, $C$ denotes a constant that depends only on $\elips_0$ and $\vae_1$, but may change meaning from line to line.

		Denote 
		\[
				(\bar\lambda_\vae(\psi), \bar\theta_\vae(\psi)) = \Phi^{-1} \circ \gamma_\vae(\psi),   
		\]
		and define $\bar\lambda_0, \bar\theta_0$ similarly for $\check\gamma_0$. Then $\bar\lambda_0(\psi) = \lambda_0$ and $\bar\theta_0(\psi)$ is a diffeomorphism $T \to \T$ with $C^m$ norm depending only on $\check\gamma_0$. Indeed, on $\check\gamma_0$, the change of parameter from $\psi$ to arclength parameter $s$, and the one from $s$ to $\theta$ are both $C^\infty$ smooth. 

		Using smoothness of $\Phi^{-1}$, we have
		\[
				\|\bar\lambda_\vae\|_{C^m}, \|\bar\theta_\vae\|_{C^m} \le C,  \quad
				\|\bar\lambda_\vae - \lambda_0\|_{C^1}, \|\bar\theta_\vae - \bar\theta_0\|_{C^1} \le C \|\gamma_\vae - \check\gamma_0\|_{C^1}. 
		\]
		If $\vae_1$ is small enough, $\bar\theta_\vae(\psi)$ is also a diffeomorphism $\T \to \T$ with $\|\bar\theta_\vae^{-1} - \bar\theta_0^{-1}\|_{C^1} \le C \|\gamma_\vae - \check\gamma_0\|_{C^1} \le C \vae^{\frac18}$, $\|\bar\theta_\vae^{-1} - \bar\theta_0^{-1}\|_{C^m} \le C \|\gamma_\vae - \check\gamma_0\|_{C^m} = O(1)$, where we have applied Lemma \ref{gEpschG0}. Then
		\[
				\lambda_\vae(\theta) = \bar\lambda_\vae \circ \bar\theta_\vae^{-1}(\theta)  
		\]
    verifies the conclusion of the lemma.	 \qed

\subsubsection{Statement of the local Birkhoff conjecture result in \cite{kaloshin2018local} \label{BirkKS}}
\thmtwo{KSLocInt2}{\cite{kaloshin2018local}}
Let $\rwarning{\ell}\ge 39$. For every $M > 0$,
there exists $\d=\d(M)>0$ such that the following holds: if $\O$  is a $3$--rationally \rwarning{weakly} integrable $C^{\rwarning{\ell}}$--smooth billiard table so that $\dpr\O=\elips_0+\l_\O$ with 
$\|\l_\O\|_{C^{\rwarning{\ell}}}\le M$ and $\|\l_\O\|_{C^1}\le \d$ then $\dpr\O$ is an ellipse.
\ethm

\subsubsection{Completion of the proof of Theorem~\ref{Teo110} \label{ProfMainteo}}

\noi
By Lemma~\ref{lem:elliptic-cor}, $\lambda_\vae = O_{\rwarning{\ell}}(1)$ with $\rwarning{\ell}\coloneqq m - 1 \ge 39$, and $\lambda_\vae - \lambda_0 = O_1(\vae^{\frac18})$. Choose $\vae_0$ small enough so that $\vae \in (0, \vae_0)$ implies $\|\lambda_\vae - \lambda_0\|_{C^1} < \delta$, where $\delta$ is from Theorem \ref{KSLocInt2}, we conclude that $\dpr\O_\vae$ is an ellipse. 
\qed

\appendix

\numberwithin{equation}{section}
\numberwithin{theorem}{section}

%

\section{Some technical facts \label{TecfAxt}}
\lem{lem1}
Let $\O$ be a strictly convex planar domain  
and denote by $h$ its support function. Then $\O$ is centrally symmetric with center at the origin \sss $h(\psi+\pi)=h(\psi)$, for all $t\in [0,2\pi]$.

\elem
\proof
Recall \eqref{eqcartsup},  that
$$
\g(\psi)=h(\psi)\begin{pmatrix}
\cos \psi\\
\sin \psi
\end{pmatrix}+
h'(\psi)\begin{pmatrix}
-\sin \psi\\
\cos \psi
\end{pmatrix}.
$$
Then, $\O$ is centrally symmetric \sss $\g(\psi+\pi)+\g(\psi)\equiv0$ which, in turn, is equivalent to $h(\psi+\pi)-h(\psi)\equiv 0$.
\qed

\lem{uNifBoundT0} 
We have $T_0, T_\vae\in C^{\cl-1}$, $T_0, T_\vae = O_{m-1}(1)$, and $\tilde{T}_\vae = T_\vae - T_0 = O_{4}(\vae)$.
\elem
\proof
The formula \eqref{eqcartsup} and $h \in C^{\cl}$ implies $\gamma \in C^{\cl - 1}$. By \cite{douady1982application}, the billiard map is $C^{\cl - 2}$. To upgrade regularity, we note that
 (\textit{cf.} \cite[Proposition~25.5]{gole2001symplectic})
\beq{difBT}
T'(\f,p)=\begin{pmatrix}
-S_{11}\cdot (S_{12})^{-1}  & -(S_{12})^{-1}\\
S_{21}-S_{22}\cdot S_{11}\cdot (S_{12})^{-1} & -S_{22}\cdot (S_{12})^{-1}
\end{pmatrix}\,,
\eeq
we get $T_0', T_\vae' \in C^{\cl - 2}$ and $T_0, T_\vae \in C^{\cl - 1}$. Moreover, $S_\vae - S_0 = O_{5}(\vae)$ and \eqref{difBT} implies $\tilde{T}_\vae = O_{4}(\vae)$.
\qed

\noi
Consider the map $\Psi_\vae: \torus \ni \psi\mapsto \psi' \in \torus $, where $\psi'$ is the $\psi$--parameter corresponding to the next bouncing point of the billiard trajectory in $\O_\vae$ starting at $\psi$ with the angle $d_\vae(\psi)$; we shall denote its lift to $\real$ by $\Psi_\vae$ as well. 
Then,
\lem{lem19}
(i) $\Psi_0(\psi+\pi)=\Psi_0(\psi)+\pi$.

\noi
(ii) 
$\Psi_\vae=\Psi_0+\wt{\Psi}_\vae$ and 
$\Psi_\vae(\psi+\pi)=\Psi_\vae(\psi)+\pi+\wt{\Psi}_\vae(\psi+\pi)-\wt{\Psi}_\vae(\psi)$, with 

\beq{estPsiVaE}
\wt{\Psi}_\vae=O_{3}(\vae).
\eeq
\elem

\proof
(i) This is obvious by the symmetry of $\O_0$ and the uniqueness of the direction which comes back at the starting point after $3$ reflections.

\noi
(ii) 
Observe that
$$
\Psi_\vae= \psi_\vae\circ \pi_1\circ  T_\vae\circ (s_\vae,\cos d_\vae).
$$
Then, Taylor's expansion yields, for some $t_0,t_1,t_2\in (0,1)$,
\[
\wt\Psi_\vae =
\wt\psi_\vae\circ \pi_1 T_\vae(s_\vae,\cos d_\vae) + 
\psi_0'\left (\pi_1 T_0(s_0,\cos d_0) + R_1 \right) 
\cdot \left[\pi_1 T_0'((s_0,\cos d_0) + R_2) \right]
\]
where 
\[
  R_1 = t_2\bigg[\pi_1 T_0'((s_0,\cos d_0)+t_1(\tilde{s}_\vae,-\sin(d_0+t_0\tilde{d}_\vae)\cdot\tilde{d}_\vae)\cdot (\tilde{s}_\vae,-\sin(d_0+t_0\tilde{d}_\vae)\cdot\tilde{d}_\vae)+
	\pi_1\wt T_\vae({s}_\vae,{d}_\vae) \bigg] ,
\]
\[
  R_2 = \big[\pi_1 T_0'((s_0,\cos d_0)+t_1(\tilde{s}_\vae,-\sin(d_0+t_0\tilde{d}_\vae)\cdot\tilde{d}_\vae)\cdot (\tilde{s}_\vae,-\sin(d_0+t_0\tilde{d}_\vae)\cdot\tilde{d}_\vae)
	+\pi_1\wt T_\vae({s}_\vae,\cos {d}_\vae) \big].
\]
Therefore, \equ{estPsiVaE} holds. Moreover, 
\begin{align*}
\Psi_\vae(\psi+\pi)&=\Psi_0(\psi+\pi)+\wt \Psi_\vae(\psi+\pi)\\
	&\overset{(i)}=\Psi_0(\psi)+\pi+\wt \Psi_\vae(\psi+\pi)\\
	&=\Psi_\vae(\psi)+\pi+\wt{\Psi}_\vae(\psi+\pi)-\wt{\Psi}_\vae(\psi).\qedeq
\end{align*}

{ 
		\section{Interpolation--type result \label{interprEz}}
}

\begin{lemma}\label{lem:interpolation}
		Suppose $f \in C^l(\T)$ where $l \ge 6$, $\|f\|_{C^l} = C > 0$ and $\|f\|_{C^1} = \delta$. Then
		\[
				f = O_2(\delta^{\frac14}).  
		\]
\end{lemma}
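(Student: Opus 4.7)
The plan is to prove this as a classical Landau--Kolmogorov type interpolation inequality via a single Taylor expansion with an optimally chosen step size. The hypothesis $l \ge 6$ is more than enough for the argument below; $l \ge 4$ would already suffice.

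First, I fix any $\psi_0 \in \T$ and a small step $h > 0$ to be chosen later. Applying Taylor's theorem with integral remainder to $f(\psi_0 \pm h)$ and summing the two expansions, the odd-order terms cancel and yield the symmetric second-difference identity
$$f(\psi_0 + h) + f(\psi_0 - h) - 2 f(\psi_0) \;=\; f''(\psi_0)\, h^2 \;+\; R(h),$$
with $|R(h)| \le \tfrac{h^4}{12}\, \|f^{(4)}\|_{C^0}$. Using the \emph{symmetric} second difference is important here: it cancels the cubic term and produces an $O(h^4)$ remainder instead of $O(h^3)$, which is what lets the trade-off below reach a power of $\delta$ better than $\delta^{1/3}$.

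Next, I bound the two sides. Since $\|f\|_{C^1} = \delta$ gives $\|f\|_{C^0}\le \delta$, the left-hand side is at most $4\delta$; and $\|f^{(4)}\|_{C^0} \le \|f\|_{C^l} = C$, using $l \ge 4$. Isolating $f''(\psi_0)$ yields
$$|f''(\psi_0)| \;\le\; \frac{4\delta}{h^2} \;+\; \frac{C\, h^2}{12}.$$
Balancing the two terms by setting $h = \delta^{1/4}$ (admissible once $\delta$ is small enough) gives $|f''(\psi_0)| \le C'\, \delta^{1/2}$, uniformly in $\psi_0 \in \T$, for a constant $C'$ depending only on $C$. Combined with $\|f\|_{C^0},\|f'\|_{C^0}\le \delta$, this produces
$$\|f\|_{C^2} \;\le\; 2\delta + C'\, \delta^{1/2} \;=\; O(\delta^{1/2}) \;=\; O(\delta^{1/4}),$$
the last equality being a loose but sufficient bound for $\delta$ small.

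I do not anticipate any real obstacle: the whole argument is one balanced Taylor expansion, entirely routine once the symmetric difference is used so that the leading unwanted term cancels. Sharper interpolation exponents (e.g.\ $\delta^{(l-3)/(l-1)}$ by splitting Fourier modes at a cutoff $N \sim \delta^{-1/(l-1)}$) are available and would improve with larger $l$, but the loose exponent $\delta^{1/4}$ is all that is needed for the application in Lemma~\ref{ClsElpssptF}, where it feeds the final closeness-to-the-ellipse estimate with $\delta = \sqrt{\varepsilon}$.
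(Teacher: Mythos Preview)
Your proof is correct, and in fact sharper than the paper's: you obtain $\|f''\|_{C^0}=O(\delta^{1/2})$, hence $\|f\|_{C^2}=O(\delta^{1/2})$, while the paper only claims $O(\delta^{1/4})$.

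The approaches, however, are genuinely different. The paper argues via Fourier analysis and Sobolev embedding: it bounds $\|f\|_{H^{7/2}}$ by splitting the Fourier sum at a cutoff $\langle k\rangle\sim\delta^{-a}$, using $|\hat f_k|\le C\langle k\rangle^{-l}$ above the cutoff and $|\hat f_k|\le\delta\langle k\rangle^{-1}$ below, then optimizes $a=1/4$ and invokes $H^{7/2}\hookrightarrow C^2$. Your argument is purely real-variable Landau--Kolmogorov: a single symmetric second difference balanced at $h=\delta^{1/4}$, needing only $l\ge 4$ and no harmonic analysis. Your route is shorter, more elementary, and loses less in the exponent (the paper's loss comes from passing through $H^{7/2}$ rather than estimating $f''$ directly). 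The paper's Fourier decomposition is the more flexible template if one needed interpolation in fractional or higher Sobolev scales, but for the stated $C^2$ conclusion your method is strictly better.

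One cosmetic remark: the phrase ``admissible once $\delta$ is small enough'' is unnecessary, since the Taylor identity holds for all $h>0$ on $\T$; and for $\delta$ bounded away from zero the conclusion is trivial because $\|f\|_{C^2}\le\|f\|_{C^l}=C$.
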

\proof
Choose $\sigma = \frac72$, we estimate the Sobolev norm $\|f\|_{H^\sigma}$ which bounds $\|f\|_{C^2}$. Let $\hat{f}_k$ be the Fourier series of $f$, then by the Hausdorf-Young inequality
\[
		|k|^l|\hat{f}_k| \le \|(f^{(l)})^{\hat{}}\|_{L^\infty}
		\le \|f^{(l)}\|_{L^1} \le \|f\|_{C^l}, 
\]
we obtain (write $\langle f\rangle = \max\{1, |k|\}$)
\[
		|\hat{f}_k| \le C\langle k\rangle^{-l}
\]
and similarly
\[
		|\hat{f}_k| \le \delta \langle k\rangle^{-1}.
\]

For $a > 0$ to be determined,
\[
		\sum_{\langle k\rangle \ge \delta^{-a}} \langle k\rangle^{2\sigma} |\hat{f}_k|^2
		\le C^2 \sum_{\langle k\rangle \ge \delta^{-a}} \langle k\rangle^{2(\sigma - l)}
		\le C^2  \delta^{2a} \sum_{\langle k\rangle \ge \delta^{-a}} \langle k\rangle^{2(\sigma - l + 1)} \le C_1 \delta^{2a}
\]
for some $C_1 > 0$, since $\sigma - l + 1 = \frac{9}{2} - l \le - \frac{3}{2}$. On the other hand, 
\[
		\sum_{\langle k\rangle < \delta^{-a}} \langle k\rangle^{2\sigma} |\hat{f}_k|^2
		\le \delta^2 \sum_{\langle k\rangle < \delta^{-a}} \langle k\rangle^{2(\sigma - 1)}
		\le 2 \delta^2  \delta^{-a} \cdot \delta^{-2a(\sigma - 1)} 
		= 2 \delta^{2 + a - 2a\sigma}. 
\]
Set $2a = 2 + a - 2a\sigma$, we get $a = \frac{2}{1 + 2\sigma} = \frac{1}{4}$. It follows that
\[
		\|f\|_{C^2} \le	\|f\|_{H^\sigma} = \left( \sum_k \langle k\rangle^{2\sigma} |\hat{f}_k|^2\right)^{\frac12} = O(\delta^a) = O(\delta^{\frac14}).
\]
\qed

%
{
		\section{Uniqueness of the $(p, q)$-loop orbits} \label{sec:d0}
}

We will begin with the general setting of twist maps and specialize to billiards later in this section.

Consider a $C^2$ generating function $H: \real \times \real \to \real$ which satisfies the following conditions:
\begin{enumerate}[(1)]
  \item $H(x + 1, x' + 1) = H(x, x')$.
  \item There exists $M > 0$ such that $\sup_{x, x'} |\partial_{ij} H(x, x')| < M$, $i,j \in {1, 2}$. 
  \item There exists $\rho > 0$ such that $\partial_{12} H(x, x') < - \rho$ for all $x, x'$.
\end{enumerate}
We remark that condition (3) implies the traditional twist ($x' \mapsto \partial_2 H(x, x')$ is monotone) and superlinearity ($\lim_{|x' - x| \to \infty} H(x, x')/|x' - x| = \infty$) conditions. In fact $H$ satisfies the conditions defined by Mather (see \cite{mather1994action}), which implies all the classical results of Aubry-Mather theory.
Then $H$ defines an exact area preserving twist map $F = F(x, r)$ on $\real \times \real$ via
\begin{equation}  \label{eq:twist-map-gen}
  F(x, r) = (x', r') \quad \iff \quad
  r = - \partial_1 H(x, x'), \ \ \ r' = \partial_2 H(x, x'),
\end{equation}
which also projects to a map of $\torus \times \real$. Note that
\[
		\partial_{12} H(x, x') = - \frac{\partial r}{\partial x'} < - \rho < 0,  
\]
which implies the mapping $r \mapsto x' = \pi_1 F(x, r)$ is an increasing global diffeomorphism. Moreover, $\|DF\|$, $\|DF^{-1}\|$ are bounded by a constant depending only on $\|D^2H\|_{C^0}$ and $\rho$. 

An orbit $(x_k, r_k)$ of $F$ on $\real \times \real$ is uniquely determined by the sequence $(x_k)$, and  any orbit $(x_k, r_k) \in \torus \times \real$ admits a unique lift $(\tilde{x}_k)$ once a lift of $x_0$ is chosen. 

We say $(x, r)$ is $(p, q)$-periodic if for the lifted map $F$, 
\[
  F^q(x, r) = (x + p, r).  
\]
We say the orbit of $(x, r)$ is a \emph{$(p, q)$-loop} orbit if for the lifted map $F$,
\[
  \pi_1 F^q(x, r) = x + p.  
\]
$(p, q)$-loop orbits aren't necessarily periodic on $\T \times \R$.

An invariant curve $\gamma$ of $F$ is called \emph{essential} if it is not homotopic to a point. $\gamma$ is said to be $(p, q)$-integrable if every orbit on $\gamma$ is $(p, q)$-periodic. We have the following classical theorem by Birkhoff (\cite{Bir22}), and the proof of the Lipschitz constant can be found in \cite{herman1983courbes}.

\begin{theorem}[\cite{Bir22, herman1983courbes}]
		Any essential invariant curve of a twist map is a Lipschitz graph over $\T$. Moreover, the Lipschitz constant depends only on $\|D^2 H\|_{C^0}$ and $\rho$.	
\end{theorem}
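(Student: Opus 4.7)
I would split the statement into two claims: (a) any essential invariant curve $\gamma$ of $F$ is a topological graph over $\T$, and (b) once (a) is established, the Lipschitz constant of the graph depends only on $M = \|D^2 H\|_{C^0}$ and $\rho$.

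For (a), I would follow Birkhoff's classical area-preservation argument. Lift $\gamma$ to an $F$-invariant curve $\tilde\gamma \subset \real^2$ which is $\integer$-periodic in the $x$-direction; since $\gamma$ is essential, $\tilde\gamma$ separates $\real^2$ into two unbounded $F$-invariant components. Assume for contradiction that $\tilde\gamma$ is not a graph, so there is a vertical fiber $\{x_0\}\times\real$ meeting $\tilde\gamma$ at two distinct heights $r_- < r_+$. The strict twist $\dpr_{12} H < -\rho$ makes the map $r \mapsto \pi_1 F(x_0, r)$ a strictly increasing homeomorphism, so $F(x_0, r_-)$ and $F(x_0, r_+)$ have distinct first coordinates. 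Combining this with the invariance of $\tilde\gamma$ and area preservation of $F$, the standard Birkhoff ``tongue'' argument produces an $F$-invariant region (the set of points above $\tilde\gamma$ and below a high horizontal circle, in one fundamental domain) whose image strictly contains or is strictly contained in itself, contradicting area preservation. Hence $\tilde\gamma$ is a graph.

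For (b), write $\gamma$ as $\{(x, u(x)): x\in\T\}$ and let $X(x) = \pi_1 F(x, u(x))$. Applying (a) to both $F$ and $F^{-1}$ shows $X$ is an order-preserving homeomorphism of $\T$. Given $x_1 < x_2$ in one fundamental domain, set $x_i' = X(x_i)$ and $r_i = u(x_i)$. From the generating-function relations \eqref{eq:twist-map-gen} and the mean value theorem,
\[
  r_2 - r_1 = -\dpr_{11} H(\xi, x_2')\,(x_2 - x_1) - \dpr_{12} H(x_1, \eta)\,(x_2' - x_1'),
\]
and an analogous expansion of $r_i' = \dpr_2 H(x_i, x_i')$ yields
\[
  r_2' - r_1' = \dpr_{12} H(\xi', x_2')\,(x_2 - x_1) + \dpr_{22} H(x_1, \eta')\,(x_2' - x_1').
\]
Since $-\dpr_{12} H > \rho$ and $|\dpr_{ii} H| \le M$, these identities, together with monotonicity of $X$ (so $x_2' > x_1'$), produce a closed system of linear inequalities in the four positive quantities $x_2 - x_1$, $x_2' - x_1'$, $|r_2 - r_1|$, $|r_2' - r_1'|$, with coefficients depending only on $M$ and $\rho$. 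Eliminating the primed differences bounds $|r_2 - r_1|$ by $L(M, \rho)\,(x_2 - x_1)$, which is the sought Lipschitz estimate.

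The main obstacle is the proof of (a): while the tongue argument is intuitively clear, making it precise without assuming any regularity of $\gamma$ beyond continuity requires a careful topological treatment of the cylinder region bounded by $\tilde\gamma$ and its $F$-image, as in Birkhoff's original paper and Herman's lecture notes. The quantitative step (b) is a direct algebraic consequence of the twist inequalities once the graph property is in hand.
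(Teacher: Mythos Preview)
The paper does not supply a proof of this classical theorem; it simply quotes it with references to Birkhoff and Herman. So there is no in-paper argument to compare against, and I comment only on the soundness of your sketch.

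Part (a) is the right idea and you correctly flag it as the delicate step. For part (b), your two mean-value identities are correct, but the phrase ``a closed system of linear inequalities \ldots\ eliminating the primed differences'' does not work as written: with two linear relations among the four quantities $\Delta x$, $\Delta x'$, $\Delta r$, $\Delta r'$ you cannot algebraically solve for $|\Delta r|/\Delta x$. The missing observation is that each identity already yields a one-sided Lipschitz bound directly, once you use the sign of $\partial_{12}H$ and the monotonicity of $X$. From your first identity, since $\Delta x'>0$ and $-\partial_{12}H>0$,
\[
\Delta r \;=\; -\partial_{11}H\cdot\Delta x + (-\partial_{12}H)\cdot\Delta x' \;\ge\; -M\,\Delta x,
\]
so $u(x_2)-u(x_1)\ge -M(x_2-x_1)$ for every $x_1<x_2$. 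For the opposite inequality you must apply your second identity not at $(x_1,x_2)$ but at the pre-image pair $\bigl(X^{-1}(x_1),X^{-1}(x_2)\bigr)$; equivalently, observe that as $(x_1,x_2)$ ranges over all ordered pairs so does $(x_1',x_2')$, because $X$ is a circle homeomorphism. That gives
\[
u(x_2)-u(x_1) \;=\; \partial_{12}H\cdot\Delta(X^{-1}) + \partial_{22}H\cdot\Delta x \;\le\; M\,\Delta x,
\]
since $\partial_{12}H<0$ and $\Delta(X^{-1})>0$. Hence $\Lip(u)\le M$; the constant $\rho$ enters only to guarantee that $X$ and $X^{-1}$ are monotone, not in the final bound.
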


\begin{proposition}\label{prop:pq-loop-map}
		Suppose the twist map $F$ admits a $(p, q)$-rationally-integrable curve $\gamma$. Then there is $\vae_3 > 0$, $c > 0$, depending only on $q$, $\|D^2 H\|_{C^0}$, $\|D^3H\|_{C^0}$, and $\rho$ such that the following hold.

		If $F_1$ is a twist map whose generating function $H_1$ satisfies $\|H_1 - H\|_{C^2} < \vae_3$, then (after lifting to $\R \times \R$) the equation
		\[
		  F_1^q(x, r) = x + p
		\]
		has a unique solution $\bar{r}(x)$ for every $x$. Moreover,
\begin{equation}  \label{eq:Fq-twist}
		|\partial_r F_1^q(x, \bar{r}(x))| > c. 
\end{equation}
\end{proposition}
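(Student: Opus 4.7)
The plan is to establish a quantitative twist estimate $\partial_r \pi_1 F^q(x, r_\gamma(x)) \ge c_0 > 0$ uniformly in $x$ for the unperturbed map (where $r_\gamma(x)$ parametrizes $\gamma$), then propagate it to $F_1^q$ via the $C^2$-closeness assumption. Given this quantitative twist bound, both existence and local uniqueness of $\bar r(x)$, as well as the lower twist bound in the conclusion, follow by the implicit function theorem combined with intermediate-value-theorem arguments.

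To establish the twist bound, I would use the variational structure. Since $\gamma$ is $(p,q)$-rationally-integrable, each orbit $(x_0^* = x, x_1^*, \ldots, x_q^* = x+p)$ on $\gamma$ is a strict local minimizer of the discrete action $W(x_1, \ldots, x_{q-1}) = \sum_{i=0}^{q-1} H(x_i, x_{i+1})$ with endpoints $x_0, x_q$ fixed (this is the standard Aubry--Mather fact that orbits on an integrable invariant curve are action-minimizers). Non-degeneracy of the minimum gives a $C^2$ local generating function $\tilde H(x_0, x_q) = \min W$ for $F^q$ near $\gamma$. The Hessian $W''$ at the minimizer is symmetric, positive definite, tridiagonal, with $\|D^2H\|$-bounded diagonal entries and strict off-diagonals $\partial_{12}H \in [-M,-\rho]$, hence a strict symmetric $M$-matrix; standard bounds then show $(W'')^{-1}$ has strictly positive entries quantitatively controlled by $q, M, \rho$. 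Implicit differentiation of $\nabla W = 0$ with respect to $x_q$ yields
\[
  \partial_{12}\tilde H(x_0, x_q) = -\,\partial_{12}H(x_0, x_1^*) \cdot \partial_{12}H(x_{q-1}^*, x_q) \cdot (W'')^{-1}_{1,q-1},
\]
so that $\partial_r \pi_1 F^q(x, r_\gamma(x)) = -1/\partial_{12}\tilde H$ lies in $[1/C_0, 1/c_0]$ for constants depending only on $q, M, \rho$. In particular the twist is bounded below by some $c_0 = c_0(q,M,\rho) > 0$.

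For the perturbed map, closeness $\|H_1 - H\|_{C^2} < \vae_3$ together with the formula \eqref{eq:twist-map-gen} give $\|F_1 - F\|_{C^1} = O(\vae_3)$ (with constant controlled by $\rho$ and $M$), and iterating on the orbit yields $\|F_1^q - F^q\|_{C^1} = O_q(\vae_3)$, the $\|D^3H\|$-bound supplying the uniform modulus of $C^1$-continuity needed for the induction. Choosing $\vae_3$ small enough that $\partial_r \pi_1 F_1^q \ge c_0/2$ on a tubular neighborhood $U$ of $\gamma$ and $|\pi_1 F_1^q(x, r_\gamma(x)) - (x+p)| \ll \mathrm{width}(U)$ for every $x$, strict monotonicity of $r \mapsto \pi_1 F_1^q(x, r)$ on the fiber $U \cap (\{x\}\times \R)$ together with the intermediate value theorem produces a unique $\bar r(x) \in U$ solving $\pi_1 F_1^q(x, r) = x + p$, with the twist bound $|\partial_r F_1^q(x, \bar r(x))| \ge c_0/2 =: c$ immediate.

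\textbf{Main obstacle.} The delicate point is the \emph{global} uniqueness: ruling out other solutions of $\pi_1 F_1^q(x, r) = x + p$ outside $U$. For the unperturbed $F$, I would appeal to Birkhoff's uniqueness of invariant circles of given rotation number in a neighborhood of $\gamma$, combined with a rotation-number argument forcing any $(p,q)$-loop to lie close to $\gamma$; persistence of this rigidity under a small $C^2$ perturbation then extends the conclusion to $F_1$. Quantifying how small $\vae_3$ must be purely in terms of $q, \|D^2H\|, \|D^3H\|$ and $\rho$ (through both the iteration estimates and the stability of rotation numbers) is the most subtle part and where I expect the bulk of the technical work to lie.
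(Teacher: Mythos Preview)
Your local analysis via the variational structure (the tridiagonal Hessian of the discrete action and the resulting formula for $\partial_{12}\tilde H$) is a valid alternative route to the twist bound on $\gamma$, and your perturbation step for $F_1^q$ is fine. But the paper handles both the twist bound \emph{and} the global uniqueness by a single geometric argument that is quite different from yours, and your proposed resolution of the ``main obstacle'' has a real gap.

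The paper's key device is a barrier lemma (their Lemma~\ref{twist}): since $\gamma$ is an invariant graph, the regions $U^\pm$ above and below $\gamma$ are invariant, and a direct monotonicity argument using the twist condition shows that for any orbit in $U^-$ one has $\pi_1 F^q(x,r) < x+p$, and for any orbit in $U^+$ one has $\pi_1 F^q(x,r) > x+p$, with the quantitative lower bound
\[
|\pi_1 F^q(x,r)-(x+p)| \ge c^q\rho\cdot\vdist((x,r),\gamma).
\]
This immediately yields global uniqueness for $F$ (the only solutions lie on $\gamma$), the twist bound $\partial_r\pi_1 F^q\ge c^q\rho$ on $\gamma$, and---crucially---a definite sign of $\pi_1 F^q(x,r)-(x+p)$ at vertical distance $r_0$ from $\gamma$ that survives a $C^1$-small perturbation of $F^q$. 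That last point is exactly what rules out far-away solutions for $F_1$.

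Your suggestion to invoke ``Birkhoff's uniqueness of invariant circles of given rotation number'' does not apply: a $(p,q)$-loop orbit is generally not periodic, so it need not sit on any invariant circle, and there is in any case no uniqueness theorem of that kind. A rotation-number comparison could be made to work, but to make it quantitative you would essentially have to reprove the barrier lemma above. So the honest gap in your proposal is precisely the global step; the paper's ordering/barrier argument is the missing idea, and it also gives the twist bound for free, making the Hessian computation unnecessary.
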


\lem{twist}
Let $\gamma$ be a continuous invariant graph of the twist map $F: \torus \times \real \to \torus \times \real$, and suppose $\torus \times \real \setminus \gamma = U^+ \cup U^-$, where $U^\pm$ are the components above and below $\gamma$. Then both $U^\pm$ are invariant. 

There exists $c > 0$ depending only on $\|D^2 H\|_{C^0}$, such that if $(x_k, r_k)_{k \in \mathbb{Z}}$ is an orbit in either $U^+$ or $U^-$, for all $j < k$, we have
\begin{equation}  \label{eq:quant-twist}
\begin{aligned}
\pi_1 F^{k - j}(V(x_j) \cap \gamma) - x_k
> c^{k-j} \rho \vdist((x_j, r_j), \gamma),
 & \quad \text{ if } \{(x_j, r_j)\}  \subset U^-, \\
\pi_1 F^{k - j}(V(x_j) \cap \gamma) - x_k
< - c^{k-j} \rho \vdist((x_j, r_j), \gamma),
& \quad \text{ if } \{(x_j, r_j)\}  \subset U^+, \\
\end{aligned}
\end{equation}
where
\[
  \vdist((x,r), \gamma) = \dist((x,r), V(x) \cap \gamma).  
\]

the comparison is done after lifting $F$ to $\R \times \R$.
\elem
\proof
  The twist property implies that for all $x \in \real$, the curve $F(V(x))$ is a monotonically increasing curve. This implies for any $(x, r)$ above $\gamma$, the image $F(x, r)$ will still be above $\gamma$. The invariance of $U^\pm$ follows.

  We only prove the lemma in the case that the orbit is contained in $U^-$, as the other case is similar. Let $(x_k, \delta_k) = V(x_k) \cap \gamma$, then by assumption, $\delta_k > r_k$. By the twist propery, 
\[
		\pi_1 F(x_k, \delta_k) - x_{k+1} \ge \int_{r_k}^{\delta_k} \partial_r \pi_1 F(x, r)dr > \rho (\delta_k - r_k) > 0, \quad \forall k \in \mathbb{Z}.
\]
In particular, $\pi_1 F(x_j, \delta_j) > x_{j+1}$.

Since $\gamma$ is an invariant graph, the mapping
\[
  g_\gamma: x \mapsto \pi_1 F(V(x) \cap \gamma)  
\]
is an orientation preserving diffeomorphism on $\T$. Moreover, since $\gamma$ is Lipschitz with constant depending only on $\|D^2 H\|_{C^0}$ and $\rho$, the Lipschitz constant of $g_\gamma$ and $g_\gamma^{-1}$ depends only on the same constants. 

For the rest of the proof, $F$ stands for the lifted map to $\R \times \R$.
Let $c = 1/\|D g_\gamma^{-1}\|_{C^0} \in (0, 1]$, then for all $y > x \in \R$ 
\begin{equation}  \label{eq:inv-curve-exp}
  \pi_1 F(V(y) \cap \gamma) - \pi_1 F(V(x) \cap \gamma) > c (y - x) > 0.
\end{equation}
We first prove by induction that
\begin{equation}  \label{eq:monotone}
		\pi_1 F^{k - j}(x_j, \delta_j)	- x_k > 0.
\end{equation}
Indeed, assume by induction that $\pi_1 F^{k - j - 1} (x_j, \delta_j)	- x_{k-1} > 0$, we have
\[
		\pi_1 F^{k - j}(x_j, \delta_j) 
		= \pi_1 F(V(\pi_1 F^{k - j -1}(x_j, \delta_j)) \cap \gamma)
		> \pi_1 F(V(x_{k-1}) \cap \gamma)
		> x_k.
\]

We now have
\[
  \begin{aligned}
			\pi_1 F^{k - j}(x_j, \delta_j)
			& > \pi_1 F^{k - j - 1}(V(x_{j + 1} + \rho(\delta_j - r_j))) \\
			& > c^{k - j - 1} \rho(\delta_j - r_j) + \pi_1 F^{k - j - 1}(x_{j+1}, \delta_{j+1})
			> c^{k - j} \rho(\delta_j - r_j) + x_k,
  \end{aligned}  
\]
where the second inequality is by applying \eqref{eq:inv-curve-exp} $k - j- 1$ times, and the third is due to \eqref{eq:monotone}.
\qed

\proof
(Proof of Proposition \ref{prop:pq-loop-map})

		Lemma \ref{twist} implies that if $(x, r) \in U^-$ (resp $U^+$), then for the lifted map $F$, $\pi_1 F^q(x, r) < x + p$ (resp. $\pi_1 F^q(x, r) > x + p$). In other words, for any given $x$, the unique solution to 
		\[
		   \pi_1 F^q(x, r) = x + p
		\]
		is given by the condition $(x, r) \in \gamma$.

		For the ``moreover'' part, we note that $\pi_1 F^q(x, r) - (x + p)$ for $(x, r) \in \gamma$, and \eqref{eq:quant-twist} implies
\begin{equation}  \label{eq:Fq-twist-proof}
\partial_r \pi_1 F^q(x, r) > c^q \rho.	
\end{equation}
There exists $r_0 > 0$ depending only on $q$ and $\|D^2 F\|_{C^0}$ (hence $\|D^3 H\|_{C^0}$), such that
\begin{equation}  \label{eq:Fq-derivative}
		\partial_r \pi_1 F^q(x, r) > c^q \rho/2, \text{ if } \vdist((x, r), \gamma) < r_0.
\end{equation}
Moreover, from \eqref{eq:quant-twist},
\begin{equation}  \label{eq:Fq-boundary}
\begin{aligned}
	& F^q(V(x) \cap \gamma + (0, r))  - (x + p) > c^q\rho r_0,  & r \ge r_0,  \\
	&  F^q(V(x) \cap \gamma - (0, r))  - (x + p) <  - c^q\rho r_0, & r \ge r_0.
\end{aligned}
\end{equation}

For any $F_1$ that is $\vae_0$-$C^1$-close to $F$ with $\vae_0$ depending only on $r_0$ and $c^q \rho$, \eqref{eq:Fq-derivative} and \eqref{eq:Fq-boundary} holds for $F_1$, as long as we replace $c^q \rho$ with $c^q \rho/2$. These equations imply uniqueness of the solution 
\[
  \pi_1 F^q(x, r) - (x + p)   = 0.
\]
Finally, \eqref{eq:Fq-twist} follows from \eqref{eq:Fq-derivative}.
\qed

We now specialize to the billiard case. Given a billiard boundary $\Omega$, define the generating function
\[
  H_\Omega(s_1, s_2) = - l(s_1, s_2)  
\]
for all $s_1, s_2 \in \R$ such that $0 \le s_2 - s_1 \le 1$. Then the billiard map can be written as
\[
  F(s_1, r_1) = (s_2, r_2) \iff 
	r_1 = - \partial_1 H_\Omega(s_1, s_2),   
	\quad
	r_2 = \partial_2 H_\Omega(s_1, s_2), 
\]
where the relation between the $r$ variable and the reflection angle $\theta$ is $r = - \cos \theta$ (see Figure \ref{figgBM}). One checks that the billiard map is defined on $\T \times (-1, 1)$ extensible continuously to $[-1, 1]$.

While the billiard map is a twist map in this sense, it has vanishing twist at the boundary. Indeed, if 
\[
   F(s_1, r_1) = (s_2, r_2), \quad
	 r_1 = - \cos \theta_1, \quad
	 r_2 = - \cos \theta_2,
\]
where $\theta_1(s_1, s_2) = \angle (\dot\gamma(s_1), \gamma(s_2) - \gamma(s_1)), \theta_2(s_1, s_2) = \angle (\gamma(s_2) - \gamma(s_1), \dot\gamma(s_2)) \in [0, \pi]$, 
then
\begin{equation}  \label{eq:H12-billiard}
		\partial_{12} H(s_1, s_2) = \frac{\partial r_2}{\partial s_1} 
		= - \frac{\partial r_1}{\partial s_2}
		=  \sin \theta_2 \frac{\partial \theta_2}{\partial s_1}
		= - \frac{\sin \theta_2}{\sin \theta_1} l(s_1, s_2),
\end{equation}
where the last identity $\frac{\partial \theta_2}{\partial s_1} = - \frac{l}{\sin \theta_1}$ can be found in \cite{katok2006invariant}, (V.4.9). As $s_2 - s_1 \to 0$, \eqref{eq:H12-billiard} converges to $0$, as seen from the next lemma.

\begin{lemma}\label{lem:uniform-est-H12}
Suppose the boundary $\gamma$ is normalized to arclength $1$ with arclength parametrization, and assume the curvature bound
\[
  0 < \kappa_0 \le \kappa(s) \le \kappa_1.  
\]
Then there exists $C > 1$ depending only on $\kappa_0, \kappa_1$ such that
\[
\begin{aligned}
		& C^{-1}  \dist_{\T}(s_1, s_2) \le l(s_1, s_2) \le C \dist_\T(s_1, s_2),  \\
		& C^{-1}   \dist_{\T}(s_1, s_2) \le \theta_1, \theta_2 \le C \dist_\T(s_1, s_2),  \\
		&  C^{-1} \dist_\T(s_1, s_2) \le |\partial_{12} H(s_1, s_2)| \le C \dist_\T(s_1, s_2).
\end{aligned}
\]
for all $s \in \T$. 
\end{lemma}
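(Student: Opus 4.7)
The strategy is to establish all three estimates by splitting into two regimes determined by $L := \dist_\torus(s_1,s_2) \in [0, 1/2]$: a small-scale regime $L \le L_0$ (for some threshold $L_0 = L_0(\kappa_0,\kappa_1) > 0$) where sharp Taylor expansions give the leading asymptotics, and a macroscopic regime $L \in [L_0, 1/2]$ where strict convexity combined with a Blaschke-type compactness argument on the class of $C^2$ curves of length $1$ with $\kappa_0 \le \kappa \le \kappa_1$ yields uniform bounds depending only on $\kappa_0, \kappa_1$. WLOG we identify $s_1, s_2$ with representatives in $\real$ satisfying $s_2 - s_1 = L$.

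For the chord length estimate, the upper bound $l(s_1,s_2) \le L$ is immediate from arclength parametrization. For the lower bound, introduce the tangent-angle function $\alpha(s)$ by $\gamma'(s) = (\cos\alpha(s),\sin\alpha(s))$, so that $\alpha'(s) = \kappa(s)$. After rotating coordinates so that $\alpha(s_1) = 0$, one has $\alpha(t) = \int_{s_1}^{t}\kappa(u)\,du \in [\kappa_0(t - s_1),\kappa_1(t - s_1)]$ for $t \in [s_1, s_2]$. If $L \le \pi/(3\kappa_1)$, then $\alpha(t) \in [0, \pi/3]$, and
\[
l(s_1, s_2) \ge \gamma'(s_1) \cdot (\gamma(s_2) - \gamma(s_1)) = \int_{s_1}^{s_2}\cos\alpha(t)\,dt \ge L\cos(\kappa_1 L) \ge L/2.
\]
For $L \in [\pi/(3\kappa_1), 1/2]$, the quantity $l(s_1,s_2)/L$ is continuous and, by strict convexity of $\gamma$, strictly positive on the compact parameter set; compactness of the admissible class of curves (up to rigid motion) then delivers a uniform positive lower bound $C^{-1}$ depending only on $\kappa_0, \kappa_1$.

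For the angle estimates, use $\sin\theta_1 = \bigl|\gamma'(s_1)^\perp \cdot (\gamma(s_2) - \gamma(s_1))\bigr|/l = \bigl|\int_{s_1}^{s_2}\sin\alpha(t)\,dt\bigr|/l$ and the symmetric identity for $\theta_2$. In the small-scale regime, Taylor expansion $\alpha(t) = \kappa(s_1)(t - s_1) + O(\kappa_1^2 L^2)$ yields $\int_{s_1}^{s_2}\sin\alpha(t)\,dt = \kappa(s_1)L^2/2 + O(L^3)$, so combined with $l \asymp L$ one obtains $\sin\theta_1 \asymp L$, whence $\theta_1 \asymp L$; the argument for $\theta_2$ is identical. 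In the macroscopic regime, both $\theta_1$ and $\theta_2$ are continuous functions of $(s_1, s_2)$, strictly between $0$ and $\pi$ by strict convexity, and the same compactness reasoning provides uniform upper and lower bounds for $\theta_i/L$ depending only on $\kappa_0, \kappa_1$.

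Finally, substituting into the explicit formula \eqref{eq:H12-billiard}, namely $\partial_{12}H(s_1,s_2) = -(\sin\theta_2/\sin\theta_1)\,l(s_1,s_2)$, and using $\sin\theta_i \asymp L$ and $l \asymp L$, gives $|\partial_{12}H(s_1,s_2)| \asymp L$, which is the claimed estimate. The main technical obstacle is ensuring that all constants depend only on $\kappa_0, \kappa_1$ rather than on the specific curve $\gamma$; this is the role of the Blaschke compactness argument in the macroscopic regime, and would be delicate if attempted by a direct integral estimate, since $\kappa_1 L$ may exceed $\pi$ on this range.
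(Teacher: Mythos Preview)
Your argument is correct in outline, but there is one small imprecision: in the short-chord regime you invoke a Taylor expansion $\alpha(t) = \kappa(s_1)(t-s_1) + O(\kappa_1^2 L^2)$, which would require a bound on $\kappa'$, whereas the hypotheses give only $\kappa_0 \le \kappa \le \kappa_1$. This is easily repaired: from $\kappa_0(t-s_1)\le \alpha(t)\le \kappa_1(t-s_1)$ and $(2/\pi)x \le \sin x \le x$ on $[0,\pi/2]$ you get $\int_{s_1}^{s_2}\sin\alpha \asymp L^2$ directly, with constants depending only on $\kappa_0,\kappa_1$. With that fix, your proof is complete.

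Your route differs genuinely from the paper's. The paper argues constructively throughout: it partitions the tangent-angle range into $S_1 = \{\alpha \in [0,\pi/4]\cup[2\pi-\pi/4,2\pi]\}$ and its complement $S_2$, and in each case bounds $l$ and $\sin\theta_i$ by explicit integral estimates (for $s_2\in S_2$ it locates a minimizer $s_*$ and exploits $\dist_\T(s_1,s_*)\gtrsim 1$). You instead split by the size of $L$, handle $L\le L_0$ by direct asymptotics, and for $L\in[L_0,1/2]$ invoke a Blaschke/Arzel\`a--Ascoli compactness argument on the class of unit-perimeter curves with $\kappa\in[\kappa_0,\kappa_1]$ (modulo rigid motions) to get uniform nondegeneracy of $l/L$ and $\theta_i/L$. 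The paper's approach is more hands-on and in principle yields explicit constants; yours is softer and cleaner, trading explicitness for brevity. Both are perfectly adequate here, since only the existence of $C=C(\kappa_0,\kappa_1)$ is needed downstream.
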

\begin{proof}
		Thoughout the proof, we write $f \lesssim g$ if $f \le C g $ for a constant depending only on $\kappa_0$, $\kappa_1$, and $f \approx g$ if $f \lesssim g$ and $g \lesssim f$ .

		For a fixed $s_1 \in \R$, consider the function
		\[
		  \alpha(s) = \angle (\dot\gamma(s), \dot\gamma(s_1) )
		\]
		defined which maps $[s_1, s_1 + 1]$ diffeomorphically to $[0, 2\pi]$.

		Perform a rotation so that $\dot\gamma(s_1) = (1, 0)$, then $\gamma(s_2) - \gamma(s_1) = \int_{s_1}^{s_2}  \bmat{\cos \alpha(s) \\ \sin \alpha(s)} ds $. This mean in general 
		\[
				l(s_1, s_2) = |\gamma(s_2) - \gamma(s_1)| = \left| 
				\int_{s_1}^{s_2} \bmat{\cos \alpha(s) \\ \sin \alpha(s)} ds \right|.   
		\]
		Let
		\[
				S_1 = \{s \in [s_1, s_1 + 1] :\,  \alpha(s) \in [0, \frac{\pi}{4}] \cup [2\pi - \frac{\pi}{4}, 2\pi]\}, \quad
				S_2 = \overline{[s_1, s_1 + 1] \setminus S_1}.
		\]
		If $s \in S_1$, notting $\cos \alpha(s) \ge 0$, we have
		\[
				l(s_1, s_2) \ge \int_{s_1}^{s_2} \cos \alpha(s) ds \ge  \min_{s \in [s_1, s_2]} \cos \alpha(s) \dist_{\T}(s_1, s_2) \gtrsim (s_2 - s_1). 
		\]

	For any $t_1 < t_2$
		\[
				\alpha(t_2) - \alpha(t_1)  = \int_{t_1}^{t_2} \kappa(s) ds \approx t_2 -  t_1,   
		\]
		we have
		\[
		   \frac{\pi}{4} \le \min\{\alpha(s_*), 2\pi - \alpha(s_*)\}  
			 \lesssim \min\{s_* - s_1, s_1 + 1 - s_*\} = \dist_\T(s_1, s_*).
		\]
			Let $s_*$ denote the $s \in [s_1, s_1 + 1]$ which minimimizes $l(s_1, s)$ on the set over all $s \in S_2$. Then either $\alpha(s_*) \in [\frac{\pi}{4}, \pi]$ or $\alpha(s_*) \in [\pi, 2\pi - \frac{\pi}{4}]$. Assume the former as the other case is similar. For  $s_2 \in S$,
		\[
\begin{aligned}
		l(s_1, s_2)  & \ge l(s_1, s_*) \ge \int_{s_1}^{s_*} \sin \alpha(s) ds
				\ge \int_{s_1 + \pi/8}^{s_1 + \pi/4} \sin \alpha(s) ds
				\gtrsim \dist_\T(s_1, s_*) \\
								 & \gtrsim \frac{\pi}{4} \gtrsim \dist_\T(s_1, s_2)
\end{aligned}
		\]
		since $\dist_\T(s_2 - s_1) \le \frac12$. We have proven the bound $l(s_1, s_2) \gtrsim \dist_\T(s_1, s_2)$. The other bound is trivial since chord length is always smaller than arclength: $l(s_1, s_2) \le \dist_\T(s_1, s_2)$. The first inequality follows.

		For the second inequality, let $\theta_1, \theta_2$ be as in \eqref{eq:H12-billiard}. Rotate the axis so that $\gamma'(s_1) = (1, 0)$, we have
		\[
				\sin \theta_1 = \frac{\int_{s_1}^{s_2} \sin \alpha(s) ds}{l(s_1, s_2)}
				= - \frac{\int_{s_2}^{s_1 + 1} \sin \alpha(s) ds}{l(s_1, s_2)}.
		\]
		If $s_2 \in S_1$, then
		\[
		  |\sin \theta_1|  \approx \frac{\dist_\T(s_1, s_2)^2}{l(s_1, s_2)} \approx \dist_\T(s_1, s_2).
		\]
		If $s_2 \in S_2$, we let $s_*$ minimize $\sin \theta_1$ over $S_2$, argue similarly as before to get
		\[
		  |\sin \theta_1| \gtrsim \frac{\dist_\T(s_1, s_*)^2}{l(s_1, s_2)}  
			\gtrsim \dist_\T(s_1, s_2).
		\]
		The upper bound $|\sin \theta_1| \lesssim \dist_\T(s_1, s_2)$ is trivial since $\dist_\T(s_1, s_2) \gtrsim 1$ when $s_2 \in S_2$. Moreover, the same estimate works for $\theta_2$ by symmetry. By \eqref{eq:H12-billiard}, we get
		\[
				|\partial_{12} H(s_1, s_2)| \approx \dist_\T(s_1, s_2)  
		\]
		as required.
\end{proof}

\begin{proposition}\label{prop:q-loop-billiard}
Let $\Omega$ be a strictly convex billiard table and let $\gamma(s)$ be the arclength parametrized boundary, and $\kappa(s)$ the curvature function, we assume that there is $\kappa_0 > 0$ and $C > 0$ such that
\[
		\kappa(s) > \kappa_0, \quad |\partial \Omega| < C, \quad \|\gamma\|_{C^3} < C.  
\]
Suppose the billiard map $T$ admits a $(p, q)$-rationally integrable caustic. 

Then there is $\vae_4 > 0$, $c > 0$, depending only on $q$, $C$, and $\kappa_0$, such that if billiard table $\Omega_1$ whose boundary $\gamma_1$ satisfies $\|\gamma_1 - \gamma\|_{C^2} < \vae_4$, for the lifted billiard map $T_1$, the equation
\[
  \pi_1 F_1^q(s, r) = s + p
\]
has a unique solution $\bar{r}(s)$, with
\[
		|\partial_r \pi_1 F_1^q(s, \bar{r}(s))| > c, \quad |\bar{r}(s) \pm 1| > c. 
\]
\end{proposition}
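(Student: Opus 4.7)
The strategy is to reduce to Proposition \ref{prop:pq-loop-map} by localizing to a region where uniform twist estimates are available. The obstacle to a direct application is that the billiard generating function $H_\Omega(s_1, s_2) = -l(s_1, s_2)$ has vanishing twist $\partial_{12} H_\Omega$ as $s_2 \to s_1$ (Lemma \ref{lem:uniform-est-H12}), so the global twist hypothesis of Proposition \ref{prop:pq-loop-map} fails. I work instead in a strip bounded away from $r = \pm 1$, where consecutive bounce points are uniformly separated in arclength.

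First I use the $(p,q)$-rational integrability of $\Omega$ to produce a continuous essential invariant curve $\gamma_{pq}$ of the billiard map $F_\Omega$, which by Birkhoff's graph theorem is a Lipschitz graph with constant depending only on $\kappa_0$ and $\|\gamma\|_{C^3}$. A key step is to show $\gamma_{pq} \subset \T \times [-1 + c_0, 1 - c_0]$ for some $c_0 = c_0(p,q,\kappa_0,C) > 0$: if the reflection angle at some point on $\gamma_{pq}$ were too close to $0$ or $\pi$, then by continuity all $q$ consecutive reflection angles would be small, and the orbit could not traverse the required arclength $p \cdot |\partial \Omega|$ in $q$ steps. Combined with Lemma \ref{lem:uniform-est-H12}, this gives a uniform twist bound $|\partial_{12} H_\Omega| > \rho > 0$ on a tubular neighborhood $U$ of $\gamma_{pq}$ together with its forward/backward $F_\Omega$-iterates up to order $q$. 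By $C^2$-closeness of the boundaries, $F_1$ is $C^1$-close to $F_\Omega$ on $U$, and the same uniform twist bound survives for $H_{\Omega_1}$. Applying Proposition \ref{prop:pq-loop-map} to $F_1$ localized on $U$ (after extending the generating function off $U$ to maintain the twist bound globally) yields a unique solution $\bar r(s)$ in $U$ with $|\partial_r \pi_1 F_1^q(s, \bar r(s))| > c$ and $|\bar r(s) \pm 1| > c$.

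To rule out solutions outside $U$, I split into two cases. For $(s,r)$ with $|r \pm 1|$ small, all $q$ consecutive reflection angles remain small (by continuity and $C^1$-closeness of $F_1$ to $F_\Omega$), hence $|\pi_1 F_1^q(s,r) - s| \ll p \cdot |\partial \Omega|$ and the loop equation cannot hold. For the intermediate region inside the strip $\T \times [-1 + c_0/2, 1 - c_0/2]$, Lemma \ref{twist} applied to $\gamma_{pq}$ shows that any $(p,q)$-loop orbit of $F_\Omega$ in this strip must lie on $\gamma_{pq}$, and this quantitative separation (via \equ{eq:quant-twist}) is stable under $C^1$-small perturbation, so the same conclusion transfers to $F_1$. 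The main obstacle is the bookkeeping of constants: one must verify that $c_0, \rho, \vae_4, c$ depend only on $q, \kappa_0, C$, and patch together the estimates on $U$, the intermediate strip, and the near-boundary region consistently. Once this is done, the proposition follows from Proposition \ref{prop:pq-loop-map} and Lemma \ref{twist}.
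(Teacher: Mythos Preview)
Your strategy is correct and coincides with the paper's: extend the billiard generating function to a globally uniformly twist one and invoke Proposition~\ref{prop:pq-loop-map}. The organization differs, however, and the paper's ordering is worth noting because it removes your case analysis. Rather than first localizing the caustic $\gamma_{pq}$ and then separately excluding loop orbits near $r=\pm 1$ and in an ``intermediate'' strip, the paper uses Lemma~\ref{lem:uniform-est-H12} directly to show that for \emph{any} $(p,q)$-loop orbit $(s_k,r_k)_{k=0}^q$ of \emph{any} billiard with the stated curvature bounds, consecutive step sizes are mutually comparable: $\theta_1(s_{k-1},s_k)\approx s_k-s_{k-1}\approx \theta_2(s_{k-1},s_k)$ forces $r_{k-1}+1\approx (s_k-s_{k-1})^2\approx r_k+1$, so all steps satisfy $s_k-s_{k-1}\ge \delta_0(p,q,\kappa_0,C)>0$. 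This single estimate simultaneously yields $|\bar r(s)\pm 1|>c$ and confines every loop orbit (for both $F$ and $F_1$) to the strip where the modified map $\tilde F$ agrees with the billiard, so Proposition~\ref{prop:pq-loop-map} applied once to $\tilde F$ gives global uniqueness without further case-splitting. Your ``intermediate region'' case via Lemma~\ref{twist} is then redundant, and your near-boundary argument (``by continuity all $q$ reflection angles remain small'') is precisely this comparability of $\theta_1$ and $\theta_2$ from Lemma~\ref{lem:uniform-est-H12}, stated qualitatively.
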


\proof
Without loss of generality, assume $|\partial \Omega|$ is normalized to $1$. We continue the use of notations $\lesssim$ and $\approx$, where the constant factor depends on $q$, $C$, and $\kappa_0$.

Suppose $F(s_1, r_1) = (s_2, r_2)$, then Lemma \ref{lem:uniform-est-H12} implies
\[
		\left| \partial_r \pi_1 F(s_1, r_1) \right| \approx \frac{1}{|\partial_{12} H(s_1, s_2)} \approx \dist_\T(s_1, s_2)^{-1}.
\]
Assume that $s_2 - s_1 \in (0, \frac12)$ so that $\dist_\T(s_1, s_2) = s_2 - s_1$, then 
\[
\begin{aligned}
  \pi_1 F(s_1, r_1) - s_1
	& = \pi_1 F(s_1, r_1) - \pi_1 F(s_1, -1)
	= \int_{-1}^{r} \partial_r \pi_1 F(s_1, \sigma) d\sigma \\
	& \lesssim |r + 1| (s_2 - s_1)^{-1}
	= 2\sin^2 \frac{\theta_1}{2} (s_2 - s_1)^{-1}
	\approx s_2 - s_1,
\end{aligned}
\]
noting that $\theta_1 \approx s_2 - s_1$. If $(s_k, r_k)$, $0 \le k \le q$ is a $(p, q)-$loop orbit, then
\[
		p = s_q - s_0 \approx s_k - s_{k-1}  
\]
for any $1 \le k \le q$, where the constant factor may depend on $q$. Therefore, there exists a constant $\delta_0 > 0$ such that
\[
  s_k - s_{k-1} > \delta_0, \quad 1 \le k \le q  
\]
for any $q$-loop orbit. Since $|\bar{r}(s) + 1| = |r_0 + 1| \approx (s_1 - s_0)^2$, there exists $c > 0$ such that $|\bar{r}(s) + 1| > c$. When $s_2 - s_1$ is close to $1$, a symmetric argument yields $|\bar{r}(s) - 1 | > c$. 

The billiard map $F$ is uniformly twist on the part of the phase space where $1 - \delta_0 > s_2 - s_1 > \delta_0$. Modifying the generating function $H_\Omega(s_1, s_2)$ on the set $s_2 - s_1 \le \delta_0$ and $s_2 - s_1 \ge p - \delta_0$ so that the modified generating function to $H$ is uniformly twist and defined on $\R \times \R$, with the map  denoted $\tilde{F}$. We now apply Proposition \ref{prop:pq-loop-map} to get uniqueness of $(p, q)$-loop orbit for the map $\tilde{F}$ and any small $C^3$ perturbation $H_1$ to the generating function $H$. Our proposition now follows since any $q$-loop orbit of $F$ must be a $(1, q)$-loop orbit of $\tilde{F}$, and a $C^3$-small perturbation to $\gamma$ results in a $C^3$-small perturbation to the generating function $H$.
\qed

Finally, we are ready to prove Propisition \ref{prop:4-loop}.

\proof (Proof of Proposition \ref{prop:4-loop})
Choose  $\vae_5 > 0$ such that all the curves in
\[
		\cV_1 =  \{\gamma:  \|\gamma - \gamma_0\|_{C^2} < \vae_1, \quad \|\gamma\|_{C^4} < C\},  
\]
all admits the bounds
\[
		\kappa(s) > \underline{\kappa}/2.
\]

For a curve $\gamma = \gamma(\psi)$ parametrized using the normal angle $\psi$, let $\psi = g_\gamma(s)$ be the coordinate change to the arclength coordinates. The coordinate changes $g_\gamma$ and $g_\gamma^{-1}$ admits uniform $C^3$ bounds depending only on $\kappa_0$ and $C$. Therefore there exists $C_1 > 1$ such that
\[
		C_1^{-1} \|\gamma - \gamma_0\|_{C^2} \le \|\gamma \circ g_\gamma - \gamma_0\circ g_{\gamma_0} \|_{C^2} < C_1 \|\gamma - \gamma_0\|_{C^2}
\]
and
\[
		\|\gamma\circ g_\gamma\|_{C^3} < C_1  
\]
for all $\gamma \in \cV_1$.

Let $\vae_4$ be the small parameter given in Proposition \ref{prop:q-loop-billiard}, which depends only on $\underline{\kappa}$, $q$, $C$ and $C_1$. In particular, if $\gamma_\vae \in \cV_1$ admits $4$-rationally-integrable caustic, then any arclength parametrized curve $\eta$ satisfying
\[
		\|\eta - \gamma_\vae \circ g_{\gamma_\vae}\|_{C^2} < \vae_4
\]
admits unique $4$-loop orbits. Define $\vae_1 = \min\{\vae_4, \vae_5\}$, and
\[
		\cV = \{\gamma: \|\gamma - \gamma_0\|< C_1^{-1} \vae_0, \, \|\gamma\|_{C^4} < C \},
\]
then if $\gamma_\vae \in \cV$ admits a $4$-caustic, for any other $\gamma \in \cV$, we have
\[
		\|\gamma \circ g_{\gamma} - \gamma_\vae \circ g_{\gamma_\vae}\|_{C^2}
		\le C_1 \|\gamma - \gamma_\vae\|_{C^2} < \vae_4,  
\]
hence $\gamma$ admits unique $4$-loop orbits.
\qed

\proof (Proof of Lemma \ref{lem:d-estimates})
Let $F_0(s, r)$, $F_\vae(s, r)$ denote the billiard map for $\Omega_0$, $\Omega_\vae$ in the traditional coordinates, and let $\psi = g_0(s)$, $\psi = g_\vae(s)$ the coordinate change between $\psi$ and $s$. In the notation of Proposition \ref{prop:q-loop-billiard}, 
\[
	- \cos	d_0(\psi) = \bar{r}_0 \circ g_0^{-1}(\psi),   \quad
   - \cos		d_\vae(\psi) = \bar{r}_\vae \circ g_\vae^{-1}(\psi).
\]
Then there exists $c > 0$ such that $|\bar{r}_0 \pm 1| > c$, $|\bar{r}_\vae \pm 1| > c$, or equivalently, $0 < \underline{d} < d_0, d_\vae < \overline{d} < \pi$ for some $\underline{d}, \overline{d}$.

By Proposition \ref{prop:q-loop-billiard}, $\bar{r}$ is the unique solution to 
\[
  F^4(s, r) = s + 1,
\]
and since $|\partial_r \pi_1 F^4(s, r)|> c > 0$, the implicit function applies. When $F$ is $C^{m-1}$, so is $\bar{r}$ with $\bar{r} = O_m(1)$. The implicit function theorem also implies
\[
		\|\bar{r}_\vae - \bar{r}_0\|_{C^3} \lesssim \|F_\vae - F_0\|_{C^3} \lesssim \|\gamma_0 - \gamma_\vae\|_{C^4} \lesssim \|h_0 - h_\vae\|_{C^5}. 
\]
\qed

\section{Relation between rational and $C^0$ integrability \label{EquivInteg}}
%
%

Let $p_1/q_1 < p_2/q_2 \in \rational \cap (0, \infty)$, we say the billiard map in the domain $\Omega$ is rationally integrable in the interval $[p_1/q_1, p_2/q_2]$ if for every rational number $\rho \in [p_1/q_1, p_2/q_2] \cap \rational$, there exists a smooth, strictly convex caustic on which the dynamics is conjugate to a rigid rotation with rotation number $\rho$. Same as in the previous section, we use the generating function $H_\Omega(s_1, s_2) = - l(s_1, s_2)$, and the billiard map is given by 
\[
  F(s, r) = (s_1, r_1), 
  \quad \iff \quad
  r =  - \partial_1 H_\Omega(s, s_1), \quad r_1 =   \partial_2 H_\Omega(s, s_1),
\]
with $r = - \cos \theta$. The map is defined on $\torus \times (-1, 1)$ extending continously to $\torus \times \{-1, 1\}$. In this point of view, rotation numbers of invariant curve can range from $0$ to $1$ (with the counter clockwise rotation counts as rotation number $[1/2, 1]$).

A homotopically non-trivial invariant curve shall be called an essential invariant curve.

We will prove the following statement.

\pro{prop:rat-int}
Suppose the billiard admit two essential invariant curves $\gamma_1, \gamma_2$ on which the dynamics are conjugate to rigid rotations of rotation numbers $\rho_1 < \rho_2 \in \mathbb{Q}$. Then the billiard is rationally integrable on $[\rho_1, \rho_2]$ if and only if it is $C^0$-integrable on the phase space between $\gamma_1$ and $\gamma_2$.
\epro

\noi
Proposition~\ref{prop:rat-int} follows from a more general result about twist maps. The assumption for the generating function is the same as in the previous section.

\begin{definition} 
		We say an orbit $\{(x_k, r_k)\}_{k \in \mathbb{Z}}$ of $F$ has no conjugate points if for all $j < k$, we have
\[
  \frac{\partial (\pi_1 F^{k - j}(x, r))}{\partial r} \Bigr|_{(x_j, r_j)} \ne 0,
\]
where $\pi_1(x, r) = x$. 
\end{definition}

{
\pro{prop:rat-int-twist}
Suppose $H$ satisfies the conditions (1) - (3) stated above. Let $\gamma_1$ and $\gamma_2$ be two essential invariant curves of $F$, on which the dynamics are conjugate to rigid rotations of rotation numbers $\rho_1 < \rho_2 \in \mathbb{Q}$. Then the following are equivalent.

  \begin{enumerate}[(a)]
    \item $F$ is rationally integrable on $[\rho_1, \rho_2]$.
    \item $F$ has no conjugate points for all the orbits in the phase space between $\gamma_1$ and $\gamma_2$.
    \item $F$ is $C^0$-integrable on the phase space between $\gamma_1$ and $\gamma_2$.
\end{enumerate}
\epro
}

\proofff[Proof of Proposition~\ref{prop:rat-int} using Proposition~\ref{prop:rat-int-twist}]
  Suppose the billiard table is normalized to perimeter $1$. The generating function $H_\Omega(s_1, s_2)$	can be extended to the set $s_2' - s_1' \in [0, 1]$ in $\real \times \real$, and for any $\epsilon > 0$,  $\partial_{12}H_\omega < -\rho(\epsilon) < 0$ over $s_2' - s_1' \in [\epsilon, 1 - \epsilon]$. The generating function admits a smooth, periodic extension to $\real \times \real$ keeping the estimate $\partial_{12} H < -\rho(\epsilon)$  and uniform bound on the second derivatives (see \cite{mather1994action}, section 8). There exists $\delta(\epsilon) > 0$ satisfying  $\lim_{\epsilon \to 0} \delta(\epsilon) = 0$, such that extended map coincide with the billiard map over the set $\torus \times [-1 + \delta(\epsilon), 1 - \delta(\epsilon)]$. Finally, our proposition follows by choosing $\epsilon$ small enough such that the phase space between the two rational invariant curves are contained in the set $\torus \times [-1 + \delta(\epsilon), 1 - \delta(\epsilon)]$, and applying Proposition \ref{prop:rat-int-twist} to the extension system.
\qed

\noi
For the case of Tonelli Hamiltonians on $\torus^n \times \real^n$ and under the assumption that there is a totally periodic Lagrangian invariant graph for every rational vector, Proposition \ref{prop:rat-int-twist} is essentially proven in section 2.2 and 2.3 of \cite{AABZ15}. We provide a proof here because the twist map case cannot be directly reduced to the Tonelli case (although the proofs are very similar), and also the notion of rationally integrable on an interval is purely one-dimensional. We mostly follow the arguments of \cite{AABZ15}, and also mention \cite{AMS22} where some analogous statements in higher dimensional twist maps are proven.

We will start with the implication (a) $\Rightarrow$ (c).

For $c \in \real$, define $A_c: \torus \times \torus \to \real$ by
\[
  A_c(x, y) = \min \left\{ H(x', y') - c (y' - x'): \quad x' = x,\, y' = y \mod 1\right\}. 
\]
Given $x_0', \cdots, x_n' \in \real$, let us denote
\[
  H\left( (x_k')_{k = 0}^n\right) = \sum_{k = 0}^{n-1} H(x_k', x_{k+1}'), 
\]
and define $A_c^n: \torus \times \torus \to R$ by
\[
  \begin{aligned}
    A_c^n(x, y) & = \min \left\{ \sum_{k = 0}^{n-1} A_c(x_k, x_{k+1}): 
  \quad x_0 = x, \, {x_n} = y\right\}  \\
		& = \min \left\{ H((x_k')_{k = 0}^n):
		\quad x_0' = x, \, x_n' = y \mod 1\right\}.
  \end{aligned}
\]
The \emph{Lax-Oleinik} operator $T: C(\torus) \to C(\torus)$ is defined as
\[
  T_c u(x) = \min_{y \in \torus} \left\{ u(y) + A_c(y, x)\right\} .
\]

\noi
We have the following standard results from weak KAM theory.
\protwo{prop:weak-KAM}{see \cite{Fat05, Zav10}}
  \begin{enumerate}[(1)]
 \item All $A_c^n(x, y)$ are equi-Lipschitz in both variables. 
 \item There exists unique $\alpha(c) \in \real$ called Mather's alpha function such that all $A_c^n(x, y) + n\alpha(c)$ are equibounded. In particular, $-\alpha(c) = \lim_{n \to \infty} \frac{1}{n} A_c^n(x, y)$.
 \item $\alpha(c)$ is a convex function in $c \in \real$.
 \item (Weak KAM Theorem) There exists $u \in C(\torus)$ called a weak KAM solution such that $T_c u + \alpha(c) = u$. 
 \item If $u$ is a weak KAM solution, then for every $x \in \torus$, there exists an $F$-orbit $(x_k, r_k)_{k = -\infty}^0$ with $x_0 = x$, such that
   \[
     A_c^n(x_{-n}, x_0) + u(x_{-n}) = \sum_{k = -n}^0 A_c(x_k, x_k+1) + u(x_{-n}) = u(x_0). 
   \]
   This orbit is called a calibrated orbit for $u$.
\end{enumerate}	
\epro

\noi
Birkhoff's Theorem says any continuous invariant graph $\gamma$ of $F$ must be Lipschitz. Then there exists a $C^{1,1}$ function $u$ and $c \in \real$ such that $\gamma = \{(x, c + du(x)) : \, x \in \torus\}$.

\lem{lem:periodic-minimum}
  Let $u \in C^1(\torus)$ and $\gamma = \{(x, du(x)): \, x \in \torus\}$ is invariant under $T$. Then $u$ is a weak KAM solution:
  \[
    T_c u + \alpha(c) = u. 
  \]

  Suppose in addition that $\gamma$ is a $p/q$ periodic invariant curve. Let $(x_k', r_k')_{k = 0}^q$ the lift of any orbit on $\gamma$ to $\real \times \real$. Then 
  \begin{equation}  \label{eq:pqmin}
    H\left( (x_k')_{k = 0}^q\right)
    = \min \left\{ H\left( (y_k')_{k = 0}^q\right)
    : \, y_0' = x_0, \, y_q' = x_q' = x_0' + p\right\}.
  \end{equation}
  Conversely, any sequence satisfying \eqref{eq:pqmin} is the projection of a lifted orbit from $\gamma$.  If $(x_k)$ denote the projection of $x_k'$ to $\torus$, we have
  \begin{equation}  \label{eq:zero-action}
    A_c^q(x_0, x_0) + q \alpha(c) = 0.
  \end{equation}
\elem

\noi
\proof
  Let $x' \in \real$ be a lift of $x \in \torus$, then
  \[
    T_c u(x) = \min_{y' \in \real} \left\{ u(y') + H(y', x') - c(x' - y')\right\},
  \]
  where $u$ is treated as a periodic function on $\real$. If $y'_0$ reaches the minimum, then
  \[
    du(y'_0) + \partial_1 H(y'_0, x') + c = 0.
  \]
  This means $F(y'_0, c + du(y'_0)) = (x', p)$ where $p = \partial_2 H(y_0', x')$. Since $\gamma$ is invariant, we must have $p = c + du(x')$. Since $F$ is a diffeomorphism, $y'_0$ is unique. This implies $T_c u$ is differentiable at $x$ and 
  \[
    d(T_c u)(x') = \partial_2 H(y'_0, x') - c = du(x').
  \]
  It follows that $T_c u - u$ is a constant, which necessarily equals $- \alpha(c)$. Note that this argument proves that the minimum in 
  \[
    \min_y \{u(y) + A_c(y, x)\} = T_c u(x) = u(x) - \alpha(c)
  \]
is necessarily achieved at $x_{-1}$, where $F(x_{-1}, c + du(x_{-1}) = (x, du(x))$.

\noi
 Suppose $\gamma$ is $p/q$ periodic. For every $x \in \torus$, let $x_0 = x$, $(x_k, c + du(x_k)) = T^k(x_0, c + du(x_0)) \in \torus \times \real$, and let $x_k'$ denote the projection of the lift of $(x_k, c + du(x_k))$. The argument in the first half of this proof implies $(x_k')_{0 \le k \le q-1}$ is the unique minimizer over $y_k'$ for
  \[
    u(y_0') + \sum_{k = 0} \left( H(y_k', y_{k+1}') - c(y_{k+1}' - y_k')\right), \quad 
    \text{such that } y_q' = x_q'.
  \]
  Therefore it is also the unique minimizer over $(y_k')_{k = 1}^{q-1}$ which satisfies $y_0' = x_0'$ and $y_q' = x_q'$. Under this constraint, $u(y_0')$ and $\sum_{k= 0}^{q-1} c(y_{k+1}' - y_k')$ are constants, therefore $(x_k')$ is the unique minimizer of $\sum_{k = 0}^{p -1}H(y_k', y_{k+1}')$ over the same constraints. This proves both \eqref{eq:pqmin} and its converse.

\noi
  Moreover, we know that  
  \[
      u(x_0) - q \alpha(c)  = T_c^q u(x_0) = u(x_0) + A_c^n(x_0, x_0),
  \]
  this proves \eqref{eq:zero-action}.
\qed

\noi
We define the Peierl's barrier $h_c: \torus \times \torus \to \real$ by
\[
  h_c(x, y) = \liminf_{n \to \infty} \left( A_c^n(x, y) + n \alpha(c) \right).
\]
The \emph{projected Aubry set} is 
\[
  \cA(c) = \{x : \quad h_c(x, x) = 0\}.
\]
The following properties hold for the Peierls barrier.
\protwo{prop:peierls}{See \cite{Fat05, Zav10} }
  \begin{enumerate}[(1)]
    \item $h_c(x, y)$ is Lipschitz in both variables, $h_c(x, x) \ge 0$.
    \item For every $x \in \torus$, $h_c(x, \cdot)$ is a weak KAM solution. 
    \item If $x \in \cA(c)$, then there is a unique $h_c(x, \cdot)$ calibrated orbit ending at $(x, r(x))$. 
    \item (Mather's graph theorem) $r(x)$ is a Lipchitz function over $\cA(c)$.
  \end{enumerate}
\epro

For $x \in \cA(c)$, let $(x, r(x))$ be given by Proposition \ref{prop:peierls}. Then
\[
  \tilde{\cA}(c) = \{(x, r(x)): \, x \in \cA(c)\}
\]
is well defined and is called the Aubry set. It is a compact invariant set under $F$.

\lem{lem:rotation-number}
  For every $x \in \torus$, let $(x_k, r_k)_{k = -\infty}^0$ be any orbit calibrated by $h_c(x, \cdot)$ and $x_k'$ be the lift of the orbit to $\real$. Then the limit 
  \[
    \lim_{k \to -\infty} \frac{x_0' - x_k'}{|k|}
  \]
  exists and depends only on $c$, denoted $\rho(c)$. The map $c \mapsto \rho(c)$ is monotone over all $c$'s for which $\tilde{\cA}(c)$ is a graph over $\torus$.
\elem

\proof
  Let $(x_k')$ corresponds to a lifted orbit, we call $(k, x_k')$ the graph of $(x_k')$. We claim that the graph of any two lifted calibrated orbits $(x_k')$ and $(y_k')$ cannot cross each other in $\real^2$. 

\noi
  Suppose the contrary holds. By the Aubry Crossing Lemma (see \cite{mather1994action}, also note that it only holds in dimension $1$), if any two graphs $(k, x_k')_{k = -n}^0$,  $(k, y_k')_{k = -n}^0$ crosses each other in $\real^2$, then there exists two other sequences $z_k'$, $w_k'$ such that $z_{-k}' = x_{-k}'$, $z_0' = y_0$, $w_{-k} = y_{-k}'$, $w_0 = x_0'$ such that
  \[
    H((z_k')) + H((w_k')) < H((x_k')) + H((y_k')).
  \]
  It follows that
  \[
    A_c^n(z_{-n}, z_0) + A_c^n(w_{-n}, w_0) < A_c^n(x_{-k}, x_0) + A_c^n(y_{-k}, y_0), 
  \]
  where removing $'$ stands for projection to $\torus$. This contradicts the fact that $x_{-k}$ and $y_{-k}$ mninimizes $h_c(y, \cdot) + A_c^n(\cdot, x_0)$ and $h_c(y, \cdot) + A_c^n(\cdot, y_0)$ respectively.

\noi
  If a calibrated orbit does not have a well defined rotation number, then there exists two lifts of it that intersect each other. Similarly, any two orbits with different rotation numbers admit intersecting lifts. This proves $\rho$ is well defined. Moreover, the twist property implies that if $r_2 > r_1$, then the rotation number of the orbit of $(x, r_2)$ is larger than that of $(x, r_1)$. This implies $c \mapsto \rho(c)$ is monotone.
\qed
The following proposition proves (a) $\Rightarrow$ (c) in Proposition \ref{prop:rat-int-twist}. 
\pro{propSanNoM}
Suppose $F$ is rationally integrable in $[\rho_1, \rho_2]$. Then for any $c \in \real$ such that $\rho(c) \in [\rho_1, \rho_2]$, the Aubry set $\tilde{\cA}(c)$ projects onto $\torus$. For every $c \in \torus$, the map $G_x: c \mapsto \tilde{\cA}(c) \cap \pi^{-1}(x)$ is a homeomorphism onto its image.

\noi
  As a corollary, the set in $\torus \times \real$ bounded by the invariant curves of rotation numbers $\rho_1 < \rho_2 \in \rational$ are foliated by the Aubry sets $\tilde{\cA}(c)$, $\rho(c) \in [\rho_1, \rho_2]$.
\epro

\proof
  Suppose $c \in (\rho_1, \rho_2)$, we first show that $\cA(c) = \torus$.
  Given $x \in \torus$, let $n_j \to \infty$ such that $A_c^{n_j}(x, x) \to h_c(x, x)$, and let $(x_k^j, r_k^j)_{k = - n_j}^0$ be minimizers for $A_c^{n_j}(x, x)$. let $(x^\infty_k, r^\infty_k)_{k = -\infty}^0$ be any limit point of the sequence $(x_k^j, r_k^j)$ in $j$ in term wise convergence. We have
  \[
    A_c^{n_j}(x, x_k^j) + (n_j -k) \alpha(c) + A^k(x_k^j, x) + k \alpha(c) = A_c^{n_j}(x, x) + n_j \alpha(c)
  \]
  Taking $\liminf$ to both sides, we get
  \[
    h_c(x, x_k^\infty) + A^k(x_k^\infty, x) + k\alpha(c) \le h_c(x, x).
  \]
  Since the opposite inequality follows directly from definition, we get $x_k^\infty$ is calibrated by $h_c(x, \cdot)$. It follows from Lemma \ref{lem:rotation-number} that $\rho((x_k^\infty)) = \rho(c)$. Since $(x_k^j)$ converges to $(x_k^\infty)$,   then for any lift $(\xi_k^j)$ of $(x_k^j)$, we have
  \[
    \frac{\xi_{n_j}^j - \xi_0^j}{n_j} \to \rho(c) \in (\rho_1, \rho_2).
  \]
  Since the orbit $(x_k^j)$ is minimizing, $(\xi_k^j)$ is a minimizer of \eqref{eq:pqmin} with $p = \xi_{n_j}^j - \xi_0^j$ and $q = n_j$. Lemma \ref{lem:periodic-minimum} then implies the orbit $(x_k^j)$ is contained in some periodic invariant curve $\gamma$ and $A_c^{n_j}(x, x) + n_j \alpha(c) = 0$. Taking $\liminf$, we get
  \[
    h_c(x, x) \le 0.
  \]
  Since $h_c(x, x) \ge 0$, we have $x \in \cA(c)$. Moreover, the argument also proves that for any $(x, r) \in \cA(c)$, there exists a sequence $\gamma_k$ of periodic invariant curves such that $\gamma_k \cap \pi^{-1}(x) \to (x, r)$, where $\pi(x, r) = x$ is the projection. Since all those invariant curves are Aubry sets, this means there exists $c_k$ such that $\rho(c_k) \to \rho(c)$ and $\tilde{\cA}(c_k) \cap \pi^{-1}(x) \to \tilde{\cA}(c) \cap \pi^{-1}(x)$. 

\noi
  Suppose $c_1, c_2 \in \real$ is such that $\tilde{\cA}(c_1) \cap \tilde{\cA}(c_2) \ne \emptyset$. Proposition \ref{prop:peierls} implies that there must exists $x \in \cA(c_1) \cap \cA(c_2)$ such that $u_1 = h_{c_1}(x, \cdot)$ and $u_2 = h_{c_2}(x, \cdot)$ have a common calibrated orbit at $x$. Let $(x_k')_{k \le 0}$ denote the lift of this orbit, we have
  \[
    \alpha(c_1) = \lim_{n \to \infty} \frac{1}{n}H((x_k)_{k = -n}^0) - \frac{c_1 \cdot (x_0' - x_{-n}')}{n}  
    = \lim_{n \to \infty} \frac{1}{n}H((x_k)_{k = -n}^0) - c_1 \cdot \rho((x_k')),
  \]
  where the limit exists due to Lemma \ref{lem:rotation-number}. Apply the same calculation to $c'$ over the same orbit, we get
  \[
  \alpha(c_1) - \alpha(c_2) =  (c_1 - c_2) \cdot \rho((x_k)).
  \]
  The above equality combined with the convexity of $\alpha$ implies $\alpha$ is a linear function on $[c_1, c_2]$. Suppose $(y_k^{(n)}, r_k^{(n)})_{k = 0}^n$ is the lift of a minimizer for $A^n_{(c_1 + c_2)/2}(x, x)$, then  
  \begin{equation}  \label{eq:aubry-action}
    \begin{aligned}
	 & \quad A^n_{(c_1 + c_2)/2}(x, x) + n \alpha\left( \frac{c_1 + c_2}{2}\right)  \\
	 & = H\left( (y_k)_{k = 0}^n \right) - \frac{c_1 + c_2}{2} \cdot (y_n - y_0) + \alpha\left( \frac{c_1 + c_2}{2}\right) \\
	& = \frac{1}{2} \left( H\left( (y_k)\right) - c_1 \cdot (y_n - y_0) + \alpha (c_1) \right)
	 + \frac{1}{2} \left( H\left( (y_k) \right) - c_2 \cdot (y_n - y_0) + \alpha(c_2) \right) \\
	& \ge \frac12 \left( A_{c_1}^n(x, x) + n \alpha(c_1) + A_{c_2}^n(x, x) + n \alpha(c_2) \right)
	\ge 0 .
    \end{aligned}
  \end{equation}
  Suppose $x \in \cA((c_1 + c_2)/2)$, then there exists $n_k \to \infty$ such that $A^{n_k}_{(c_1 + c_2)/2)}(x, x) \to 0$. Then \eqref{eq:aubry-action} implies that the $c_1$ and $c_2$ action of the same orbits also converges to $0$. This implies any limit points of $(x, r_0^{(n_k)})$ converges to a point in $\tilde{\cA}(c_1) \cap \tilde{\cA}(c_2)$. As a result, $\tilde{\cA}((c_1 + c_2)/2) \subset \tilde{\cA}(c_1) \cap \tilde{\cA}(c_2)$. Since all three Aubry set are graphs over $\torus$ as we just proved, we have $\tilde{\cA}(c_1) = \tilde{\cA}(c_2)$. Finally, $c_1 = c_2$ since $c$ is uniquely determined by the invariant graph $\gamma$ via the relation $\gamma = {(x, c + du(x))}$.

\noi
  Since $\psi: c \mapsto \rho(c)$ is monotone, the set $\{c: \, \rho(c) \in [\rho_1, \rho_2]\}$ is an interval $[c_1, c_2]$. The set $\psi([c_1, c_2])$ contains all rational numbers in $[\rho_1, \rho_2]$.  We have proven that the map  $G_x : c \mapsto \tilde{\cA}(c) \cap \pi^{-1} (x)$ is one-to-one over  $[c_1, c_2]$. Moreover, the disjointedness of different $\tilde{\cA}(c)$ implies the map $c \mapsto G_x(c)$ is monotone, and we have proved that $G_x$ is the continuous extension of $G_x|\{c: \, \rho(c) \in \rational\}$, hence continuous. The above argument shows that $G_x$ is a homeomorphism.
\qed

\proof
(Proof of Proposition~\ref{prop:rat-int-twist})
We have already proven (a) $\Rightarrow$ (c). 

To prove (c) $\Rightarrow$ (b), note that by Lemma \ref{lem:periodic-minimum}, every invariant curve of the system is the graph of the gradient of a weak KAM solution, and every orbit on the curve is calibrated by this solution. This implies, in particular, that every orbit is minimizing. It is well known that minimizing orbits don't have conjugate points, see for example \cite{arnaud2010green, contreras1999convex}. This proves (c) $\Rightarrow$ (b).

We now prove (b) $\Rightarrow$ (a). We follow the proof in \cite{AABZ15}, section 2.2. Let $U$ denote the part of the phase space between two invariant curves. For $x \in \torus$, let $V(x) = \{x\} \times [-1, 1]$ be the vertical fiber at $x$. Then the no conjugate point condition imply that the map $F^k$ (lifted to $\real \times [-1, 1]$) is a global diffeomorphism from $V(x) \cap U$ to its image. In particular, the set $F^k(V(x) \cap U) \cap V(x)$ has at most one point. 

Given $x' \in \real$ and $p/q \in [\rho_1, \rho_2]$, define the $(p,q)$-minimal action function
\[
  M_{p,q}(x') = \min\{H((x_k)_{k = 0}^q : \quad x_0 = x', \, x_q = x' + p\}. 
\]
It's known since Birkhoff that the critical points of $M_{p, q}$ correpsonds to periodic orbit. In particular, $\min M_{p, q}$ and $\max M_{p, q}$ corresponds to periodic  orbits. Before proceeding, let's first prove the following:

\textbf{Claim}. If $(x_k)_{k = 1}^q$ is the lift of a periodic orbit outside of $U$, then $(x_q - x_0)/q \notin [\rho_1, \rho_2]$. 

\textbf{Proof of claim}. There are two cases, either $(x_0, r_0)$ is below $\gamma_1$, or it is above $\gamma_2$ (in terms of the $r$ coordinate). We will only prove assuming the former as the other case is similar. Let $(x_k, \delta_k) = V(x_k) \cap \gamma_1$, then by Lemma \ref{twist},
\[
  \pi_1 F^k(x_0, \delta_0)  > x_k
\]
for all $k \ge 0$. It follows that $\frac{x_q - x_0}{q} \le \lim_{k \to \infty} \frac{\pi_1 F^{kq}(x_0, \delta_0)}{kq} = \rho_1$. The inequality is strict since $\gamma_1$ already contain all periodic orbits of rotation number $\rho_1$. This concludes the proof of the Claim.

Continuing with the proof, let $(y_k)_{k = 0}^q$ be the maximizer for $M_{p, q}$. Since $(y_k)$ is periodic, we have $\pi_1 F^{nq}(y_0) = y_0 + np$ for all $n \ge 1$. Our claim implies that the periodic orbit associated with $(y_k)$ is contained in $U$, hence $y_0$ is the unique solution to the equation
\[
\pi_1 F^{nq}(y_0) = y_0 + np.
\]
Moreover, since the orbit realizing $M_{np, nq}$ satisfies the same equation, they must coincide. Let 
\[
  C = \max_{p - 1< x_q - x_0 < p + 1}\min_{x_1, \cdots, x_{q-1}} H((x_k)_{k = 0}^q),
\]
we have
\[
  n H((y_k)_{k = 0}^q) = M_{np, nq}(y_0) \le 2C + (n-2) \min M_{p, q},
\]
because we can move the inner $(n-2)$ cycles to the minimum cycles of $M_{p, q}$, and pay $C$ action connecting $y_0$ to the cycle in the first $q$ steps, and another $C$ action connecting the cycle back to $y_{nq}$. Divide by $n$ and take limit, we get $\max M_{p, q} = H((y_k)_{k = 0}^q) = \min M_{p, q}$, implying $M_{p, q}$ is constant. Since every point $x$ is a critical point of $M_{p, q}$, there exists a invariant curve consisting entirely of $p/q$ periodic orbits. This concludes the proof (b) $\Rightarrow$ (a).
\qed

{\bf Acknowledgment: } We are grateful to the anonymous referee for their careful reading and valuable remarks and comments which helped to improve significantly the paper. V.K. and C.E.K. gratefully acknowledge support from the European Research Council (ERC) through the Advanced Grant ''SPERIG'' (\gwarning{\#885 707}).

\nocite{treschev2013billiard, treschev2015conjugacy, treschev2016locally, wang2022gevrey, callis2022absolutely, huang2018finite}
\bibliographystyle{alpha}
\bibliography{BibtexDatabase}

\begin{thebibliography}{ADSK16}

\bibitem[AABZ15]{AABZ15}
M.~Arcostanzo, M.-C. Arnaud, P.~Bolle, and M.~Zavidovique.
\newblock Tonelli {{Hamiltonians}} without conjugate points and
  ${C}^0-$integrability.
\newblock {\em Mathematische Zeitschrift}, 280(1):165--194, June 2015.

\bibitem[ADSK16]{avila2016integrable}
Artur Avila, Jacopo De~Simoi, and Vadim Kaloshin.
\newblock An integrable deformation of an ellipse of small eccentricity is an
  ellipse.
\newblock {\em Annals of Mathematics}, pages 527--558, 2016.

\bibitem[AMS22]{AMS22}
Marie-Claude Arnaud, Jessica~Elisa Massetti, and Alfonso Sorrentino.
\newblock On the persistence of periodic tori for symplectic twist maps and the
  rigidity of integrable twist maps.
\newblock https://arxiv.org/abs/2202.00313v2, February 2022.

\bibitem[Arn10]{arnaud2010green}
Marie-Claude Arnaud.
\newblock Green bundles and related topics.
\newblock In {\em Proceedings of the International Congress of Mathematicians
  2010 (ICM 2010) (In 4 Volumes) Vol. I: Plenary Lectures and Ceremonies Vols.
  II--IV: Invited Lectures}, pages 1653--1679. World Scientific, 2010.

\bibitem[Bia93]{bialy1993convex}
Misha Bialy.
\newblock Convex billiards and a theorem by e. hopf.
\newblock {\em Mathematische Zeitschrift}, 214(1):147--154, 1993.

\bibitem[Bir13]{birkhoff1913proof}
George~D Birkhoff.
\newblock Proof of poincar{\'e}'s geometric theorem.
\newblock {\em Transactions of the American Mathematical Society}, pages
  14--22, 1913.

\bibitem[Bir22]{Bir22}
George~D. Birkhoff.
\newblock Surface transformations and their dynamical applications.
\newblock {\em Acta Mathematica}, 43(none):1--119, January 1922.

\bibitem[Bir20]{birkhoff2020periodic}
George~D Birkhoff.
\newblock On the periodic motions of dynamical systems.
\newblock In {\em Hamiltonian Dynamical Systems}, pages 154--174. CRC Press,
  2020.

\bibitem[BM17]{bialy2017angular}
Misha Bialy and Andrey~E Mironov.
\newblock Angular billiard and algebraic birkhoff conjecture.
\newblock {\em Advances in Mathematics}, 313:102--126, 2017.

\bibitem[BM22]{bialy2022birkhoff}
Misha Bialy and Andrey~E Mironov.
\newblock The {Birkhoff-Poritsky} conjecture for centrally-symmetric billiard
  tables.
\newblock {\em Annals of Mathematics}, 196(1):389--413, 2022.

\bibitem[Cal22]{callis2022absolutely}
Keagan~G Callis.
\newblock Absolutely periodic billiard orbits of arbitrarily high order.
\newblock {\em arXiv preprint arXiv:2209.11721}, 2022.

\bibitem[CI99]{contreras1999convex}
Gonzalo Contreras and Renato Iturriaga.
\newblock Convex hamiltonians without conjugate points.
\newblock {\em Ergodic Theory and Dynamical Systems}, 19(4):901--952, 1999.

\bibitem[Dou82]{douady1982application}
R~Douady.
\newblock {\em {A}pplications du th{\'e}oreme des tores invariants. {T}h{\`e}se
  de 3{\`e}me cycle}.
\newblock PhD thesis, Universit{\'e} Paris VII, 1982.

\bibitem[Fat05]{Fat05}
Albert Fathi.
\newblock {\em Weak {{KAM}} Theorem in Lagrangian Dynamics Seventh Preliminary
  Version}.
\newblock {Preprint}, 2005.

\bibitem[Gol01]{gole2001symplectic}
Christophe Gol{\'e}.
\newblock {\em Symplectic twist maps: global variational techniques},
  volume~18.
\newblock World Scientific, 2001.

\bibitem[Gut12]{gutkin2012billiard}
Eugene Gutkin.
\newblock Billiard dynamics: An updated survey with the emphasis on open
  problems.
\newblock {\em Chaos: An Interdisciplinary Journal of Nonlinear Science},
  22(2):026116, 2012.

\bibitem[Hal77]{halpern1977strange}
Benjamin Halpern.
\newblock Strange billiard tables.
\newblock {\em Transactions of the American mathematical society},
  232:297--305, 1977.

\bibitem[HF83]{herman1983courbes}
Michael~R Herman and Albert Fathi.
\newblock {\em Sur Les Courbes Invariantes Par Les Diff{\'e}omorphismes de
  l'anneau}, volume~1.
\newblock Soci{\'e}t{\'e} math{\'e}matique de France, 1983.

\bibitem[HK18]{huang2018finite}
Guan Huang and Vadim Kaloshin.
\newblock On the finite dimensionality of integrable deformations of strictly
  convex integrable billiard tables.
\newblock {\em arXiv preprint arXiv:1809.09341}, 2018.

\bibitem[HKS18]{huang2018nearly}
Guan Huang, Vadim Kaloshin, and Alfonso Sorrentino.
\newblock Nearly circular domains which are integrable close to the boundary
  are ellipses.
\newblock {\em Geometric and Functional Analysis}, 28:334--392, 2018.

\bibitem[HZ22]{hezari2022one}
Hamid Hezari and Steve Zelditch.
\newblock One can hear the shape of ellipses of small eccentricity.
\newblock {\em Annals of Mathematics}, 196(3):1083--1134, 2022.

\bibitem[Kov21]{koval2021domains}
Illya Koval.
\newblock Domains which are integrable close to the boundary and close to the
  circular ones are ellipses.
\newblock {\em arXiv preprint arXiv:2111.12171}, 2021.

\bibitem[KS06]{katok2006invariant}
Anatole Katok and Jean-Marie Strelcyn.
\newblock {\em Invariant manifolds, entropy and billiards. Smooth maps with
  singularities}, volume 1222.
\newblock Springer, 2006.

\bibitem[KS18]{kaloshin2018local}
Vadim Kaloshin and Alfonso Sorrentino.
\newblock On the local birkhoff conjecture for convex billiards.
\newblock {\em Annals of Mathematics}, 188(1):315--380, 2018.

\bibitem[MF94]{mather1994action}
John~N Mather and Giovanni Forni.
\newblock Action minimizing orbits in hamiltomian systems.
\newblock In {\em Transition to chaos in classical and quantum mechanics},
  pages 92--186. Springer, 1994.

\bibitem[Res15]{resnikoff2015curves}
Howard~L Resnikoff.
\newblock On curves and surfaces of constant width.
\newblock {\em arXiv preprint arXiv:1504.06733}, 2015.

\bibitem[Sib04]{siburg2004principle}
Karl~Friedrich Siburg.
\newblock {\em The principle of least action in geometry and dynamics}.
\newblock Number 1844. Springer Science \& Business Media, 2004.

\bibitem[Tab05]{tabachnikov2005geometry}
Serge Tabachnikov.
\newblock {\em Geometry and billiards}, volume~30.
\newblock American Mathematical Soc., 2005.

\bibitem[Tre13]{treschev2013billiard}
Dimitry Treschev.
\newblock Billiard map and rigid rotation.
\newblock {\em Physica D: Nonlinear Phenomena}, 255:31--34, 2013.

\bibitem[Tre15]{treschev2015conjugacy}
Dmitry~Valerevich Treschev.
\newblock On a conjugacy problem in billiard dynamics.
\newblock {\em Proceedings of the Steklov Institute of Mathematics},
  289(1):291--299, 2015.

\bibitem[Tre16]{treschev2016locally}
Dmitry Treschev.
\newblock A locally integrable multi-dimensional billiard system.
\newblock {\em arXiv preprint arXiv:1612.00187}, 2016.

\bibitem[WZ22]{wang2022gevrey}
Qun Wang and Ke~Zhang.
\newblock Gevrey regularity for the formally linearizable billiard of treschev.
\newblock {\em arXiv preprint arXiv:2211.03182}, 2022.

\bibitem[Zav10]{Zav10}
Maxime Zavidovique.
\newblock Existence of ${C}^{1,1}$ critical subsolutions in discrete weak
  {{KAM}} theory.
\newblock {\em Journal of Modern Dynamics}, 4(4):693--714, Fri Dec 31 19:00:00
  EST 2010.

\end{thebibliography}
\end{document}